\pgfplotsset{compat=1.17}
\numberwithin{equation}{section}
\newcounter{Theorem}
\numberwithin{Theorem}{section}
\newtheorem{theorem}[Theorem]{Theorem}
\newtheorem{lemma}[Theorem]{Lemma}
\newtheorem{proposition}[Theorem]{Proposition}
\theoremstyle{definition}
\newtheorem{definition}[Theorem]{Definition}
\newtheorem{remark}[Theorem]{Remark}
\newtheorem{example}[Theorem]{Example}
\DeclareMathOperator*{\half}{\textstyle{\frac{1}{2}}}
\DeclareMathOperator*{\quarter}{\textstyle{\frac{1}{4}}}
\newcommand{\mm}{\phantom{-}}
\newcommand{\exactframe}[2]{%
  \fbox{%
    \rule{\dimexpr#1-2\fboxsep-2\fboxrule}{0pt}% width
    \rule{0pt}{\dimexpr#2-2\fboxsep-2\fboxrule}% height
  }%
}
\begin{document}

\title{Stability of the \`A Trous Algorithm Under Iteration}
\author{Brody Johnson}
\address{Department of Mathematics and Statistics, Saint Louis University, St. Louis, MO 63103}
\email{brody.johnson@slu.edu}
\author{Simon McCreary-Ellis}
\address{Department of Mathematics and Statistics, Saint Louis University, St. Louis, MO 63103}
\email{simon.mccrearyellis@slu.edu}

\date{\today}
\keywords{iterated filter banks, frames, wavelets, \`a trous algorithm}
\subjclass{42C15, 94A12}

\maketitle

\begin{abstract}
This paper examines the stability of the \`a trous algorithm under arbitrary iteration in the context of a more general study of shift-invariant filter banks.  The main results describe sufficient conditions on the associated filters under which an infinitely iterated shift-invariant filter bank is stable.  Moreover, it is shown that the stability of an infinitely iterated shift-invariant filter bank guarantees that of any associated finitely iterated shift-invariant filter bank, with uniform bounds.  The reverse implication is shown to hold under an additional assumption on the low-pass filter.  Finally, it is also shown that the separable product of stable one-dimensional shift-invariant filter banks produces a stable two-dimensional shift-invariant filter bank.
\end{abstract}

\section{Introduction} \label{intro}

A dyadic wavelet transform relies on the refinement equations satisfied by the the scaling function, $\varphi$, and wavelet, $\psi$.  Assume that there exist sequences $\lbrace h(n)\rbrace_{n\in \mathbb{Z}}$ and $\lbrace g(n) \rbrace_{n\in \mathbb{Z}}$ so that
\begin{align} 
\frac{1}{2} \varphi (\half x ) &= \sum_{n\in \mathbb{Z}} h(n) \varphi(x-n) \label{refine-phi} \\
\frac{1}{2} \psi (\half x ) &= \sum_{n\in \mathbb{Z}} g(n) \varphi(x-n) \label{refine-psi},
\end{align}

\noindent
with $\sum_{n\in \mathbb{Z}} h(n)=1$ and $\sum_{n\in \mathbb{Z}} g(n)=0$.  Let $(Df)(x)=\sqrt{2}f(2x)$ and $(Tf)(x)=f(x-1)$.  It is not difficult to show that \eqref{refine-phi} and \eqref{refine-psi} imply 
\begin{align}
\langle f, D^{j-1} T^{k} \varphi \rangle &= \sqrt{2} \sum_{n\in \mathbb{Z}} h(n) \langle f, D^{j} T^{2k+n} \varphi \rangle \label{dwt-phi} \\
\langle f, D^{j-1} T^{k} \psi \rangle &= \sqrt{2} \sum_{n\in \mathbb{Z}} g(n) \langle f, D^{j} T^{2k+n} \varphi \rangle, \label{dwt-psi} 
\end{align}

\noindent
where $j,k\in \mathbb{Z}$.  Given the sequence of coefficients $\lbrace \langle f, T^{k}\varphi \rangle \rbrace_{k\in \mathbb{Z}}$, one can thus compute the wavelet coefficients $\lbrace \langle f, D^{j}T^{k} \psi \rangle \rbrace_{k\in \mathbb{Z}}$ for all scales $j<0$ using an infinitely iterated, critically sampled dyadic filter bank.  This recursive scheme is the discrete wavelet transform associated with multiresolution analysis wavelets introduced by Mallat \cite{Mallat1989}, which blends the theory of orthonormal wavelets with the theory of perfect reconstruction filter banks \cite{SmithBarnwell}.

Alternatively, with the help of the identity $D^{-1}T=T^{2}D^{-1}$, one can show that \eqref{refine-phi} and \eqref{refine-psi} also lead to
\begin{align}
\langle f, \sqrt{2}^{j-1} T^{k} D^{j-1}\varphi \rangle &= \sum_{n\in \mathbb{Z}} h(n) \langle f, \sqrt{2}^{j} T^{k+2^{-j}n} D^{j}  \varphi \rangle = \sum_{n\in \mathbb{Z}} (U^{j}h)(n) \langle f, \sqrt{2}^{j} T^{k+n} D^{j}  \varphi \rangle \label{atrous-phi} \\
\langle f, \sqrt{2}^{j-1} T^{k} D^{j-1} \psi \rangle &= \sum_{n\in \mathbb{Z}} g(n) \langle f, \sqrt{2}^{j} T^{k+2^{-j}n} D^{j} \varphi \rangle = \sum_{n\in \mathbb{Z}} (U^{j}g)(n) \langle f, \sqrt{2}^{j} T^{k+n} D^{j}  \varphi \rangle, \label{atrous-psi} 
\end{align}

\noindent
where $j, k\in \mathbb{Z}$ with $j\le 0$ and $U$ acts on a sequence over the integers by inserting a zero between each pair of consecutive terms.  Hence, given the sequence of coefficients $\lbrace \langle f, T^{k}\varphi \rangle \rbrace_{k\in \mathbb{Z}}$, one can compute the wavelet coefficients $\lbrace \langle f, \sqrt{2}^{j} T^{k} D^{j} \psi \rangle \rbrace_{k\in \mathbb{Z}}$ for all scales $j<0$ using an infinitely iterated, non-decimated  filter bank whose filters are appropriately dilated at scale $j$ via $U^{j}$.  This recursive scheme is the \`a trous algorithm introduced by Holschneider, Kronland-Martinet, Morlet, and Tchamitchian \cite{HKMT1989}.  It is obvious from \eqref{atrous-psi} that the \`a trous algorithm respects integer translations in the sense that
$$ \langle T^{\ell} f, \sqrt{2}^{j-1} T^{k} D^{j-1} \psi \rangle = \langle f,  \sqrt{2}^{j-1} T^{k-\ell} D^{j-1} \psi \rangle,$$

\noindent
for $j,k,\ell \in \mathbb{Z}$ with $j\le 0$.  Moreover, it should be clear from \eqref{dwt-psi} that the discrete wavelet transform does not possess this property, which explains the fact that the \`a trous algorithm is sometimes referred to as the shift-invariant discrete wavelet transform.  Another key difference between the two algorithms lies in the fact that the \`a trous algorithm implements an overcomplete (or redundant) representation of the original sequence, while the representation provided by the usual discrete wavelet transform is exact \cite{Shensa1992}.

The \`a trous algorithm is well suited for many applications due to both its redundancy and translation invariance.  Applications of the \`a trous algorithm include the analysis of acoustic signals \cite{KMG1987,EGLFS2005} and the characterization of signals using multiscale edges \cite{MallatZhong1992}.  Moreover, the closely related maximum overlap discrete wavelet transform (MODWT) found application to the estimation of variance for multi-scale analysis of time series \cite{MODWT2000,PercivalWalden2000}.  Researchers have continued to find new applications for these overcomplete and translation invariant versions of the discrete wavelet transform in many areas of study, including sleep apnea \cite{Apnea2008}, signal singularities \cite{Rakowski2015}, facial recognition \cite{FacialRecognition2016}, as well as image and data fusion \cite{RemoteSensing2008, ImageFusion2011, ImageFusion2018, DataFusion2019}.  An often exploited link between the discrete wavelet transform and the \`a trous algorithm is that any perfect reconstruction filter bank for the discrete wavelet transform will yield perfect reconstruction for the \`a trous algorithm, although, in general, the converse is not true.  In fact, one often finds that the filters used in applications of the \`a trous algorithm and related shift-invariant wavelet transforms are actually perfect reconstruction filters associated with the discrete wavelet transform \cite{EGLFS2005, Apnea2008, DataFusion2019}.  It is unclear whether this choice is made based on the performance of the filters for a given application or simply stems from a gap in the mathematical literature regarding the underlying theory of the \`a trous algorithm.

In contrast, the discrete wavelet transform and its relationship with multiresolution analysis wavelets has been extensively studied.  Characterizations of trigonometric polynomial low-pass filters associated with multiresolution analysis wavelets were studied early in the development of wavelet theory by Lawton \cite{Lawton1990,Lawton1991} and Cohen \cite{Cohen1990}, culminating with a complete characterization of low-pass filters due to Gundy \cite{Gundy2000}.  More generally, Han provided separate necessary and sufficient conditions on the low- and high-pass filters under which the associated wavelet gives rise to a Riesz basis for $L^{2}(\mathbb{R})$ \cite{Han2005}.  Subsequently, Bayram and Selesnick initiated a study of the frame bounds of iterated filter banks associated with the discrete wavelet transform which relates the frame bounds of an iterated filter bank to those of the underlying wavelet system in $L^{2}(\mathbb{R})$ \cite{BayramSelesnick2009}.

In the Discussion section of their paper, Bayram and Selesnick proposed the study of iterated filter banks solely in terms of the filters themselves, without reference to any associated scaling function or wavelet \cite{BayramSelesnick2009}.  The authors, in a joint work with Bownik, have further examined the stability of iterated filter banks associated with the discrete wavelet transform, deriving easily verified sufficient conditions on the low- and high-pass filters for stability under arbitrary iteration and with uniform frame bounds \cite{BJME2022}.  The goal of the present work is to carry out a similar study for iterated shift-invariant filter banks such as those used in the \`a trous algorithm.  Ultimately, both necessary and sufficient conditions for the stability of the infinitely iterated \`a trous algorithm are of interest; however, this work will focus on straightforward sufficient conditions leading to a broader class of suitable filters for use with the \`a trous algorithm and its variants.  Owing to the fact that the \`a trous algorithm, by design, implements a highly redundant representation of signals, this study will allow for the use of multiple high-pass filters in conjunction with a single low-pass filter.  Moreover, it will be natural to focus the analysis on an infinitely iterated shift-invariant filter bank, as depicted in Figure \ref{SI-IFB-analysis}.  

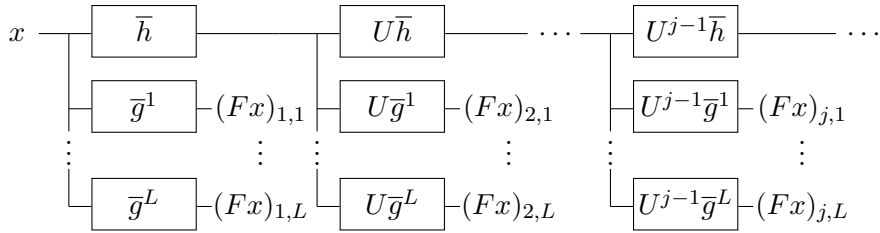
\begin{figure}[hbtp]
\centering
\begin{picture}(120,28)(1,6)
	\put (1,30){$x$}
	\put (5,31){\line(1,0){7}}

	\put (9,31){\line(0,-1){13}}
	\put (8,13.5){\makebox[2mm]{$\vdots$}}
    \put (12,29){\exactframe{14mm}{7mm}}
    \put (12,30){\makebox[14mm]{$\overline{h}$}}
	\put (26,31){\line(1,0){11}}
	\put (37,31){\line(1,0){8}}

	\put (42,31){\line(0,-1){13}}
	\put (41,13.5){\makebox[2mm]{$\vdots$}}
    \put (45,29){\exactframe{14mm}{7mm}}
    \put (45,30){\makebox[14mm]{$U\overline{h}$}}
	\put (59,31){\line(1,0){11}}
	
    \put (71,30){\makebox[6mm]{$\cdots$}}
	\put (77,31){\line(1,0){7}}
	\put (81,31){\line(0,-1){13}}
	\put (80,13.5){\makebox[2mm]{$\vdots$}}
    \put (84,29){\exactframe{14mm}{7mm}}
    \put (84,30){\makebox[14mm]{$U^{j-1}\overline{h}$}}
	\put (98,31){\line(1,0){13}}

    \put (9,13){\line(0,-1){5}}
    \put (9,21){\line(1,0){3}}
    \put (12,19){\exactframe{14mm}{7mm}}
    \put (12,20){\makebox[14mm]{$\overline{g}^{1}$}}
	\put (26,21){\line(1,0){2}}
	\put (30.5,20){\makebox[8mm]{$(Fx)_{1,1}$}}
	\put (33.5,13.5){\makebox[2mm]{$\vdots$}}

	\put (42,13){\line(0,-1){5}}
	\put (42,21){\line(1,0){3}}
    \put (45,19){\exactframe{14mm}{7mm}}
    \put (45,20){\makebox[14mm]{$ U\overline{g}^{1}$}}
	\put (59,21){\line(1,0){2}}
	\put (63.5,20){\makebox[8mm]{$(Fx)_{2,1}$}}
	\put (66.5,13.5){\makebox[2mm]{$\vdots$}}

	\put (81,13){\line(0,-1){5}}
	\put (81,21){\line(1,0){3}}
    \put (84,19){\exactframe{14mm}{7mm}}
    \put (84,20){\makebox[14mm]{$U^{j-1}\overline{g}^{1}$}}
	\put (98,21){\line(1,0){2}}
	\put (102.5,20){\makebox[8mm]{$(Fx)_{j,1}$}}
	\put (105.5,13.5){\makebox[2mm]{$\vdots$}}

	\put (9,8){\line(1,0){3}}
    \put (12,6){\exactframe{14mm}{7mm}}
    \put (12,7){\makebox[14mm]{$\overline{g}^{L}$}}
	\put (26,8){\line(1,0){2}}
	\put (30.5,7){\makebox[8mm]{$(Fx)_{1,L}$}}
	
	\put (42,8){\line(1,0){3}}
    \put (45,6){\exactframe{14mm}{7mm}}
    \put (45,7){\makebox[14mm]{$ U\overline{g}^{L}$}}
	\put (59,8){\line(1,0){2}}
	\put (63.5,7){\makebox[8mm]{$(Fx)_{2,L}$}}

	\put (81,8){\line(1,0){3}}
    \put (84,6){\exactframe{14mm}{7mm}}
    \put (84,7){\makebox[14mm]{$U^{j-1}\overline{g}^{L}$}}
	\put (98,8){\line(1,0){2}}
	\put (102.5,7){\makebox[8mm]{$(Fx)_{j,L}$}}

    \put (111,30){\makebox[8mm]{$\cdots$}}
\end{picture}

\caption{Analysis schematic for an infinitely iterated shift-invariant filter bank.} \label{SI-IFB-analysis}
\end{figure}

\section{Preliminaries} \label{prelim}

\subsection{Mathematical Background and Notation}

Recall that a \emph{frame} for a separable Hilbert space $\mathbb{H}$ is a collection $\lbrace e_{j} \rbrace_{j\in J} \subset \mathbb{H}$, where $J$ is a countable index set, for which there exist constants $0<A\le B<\infty$ (called \emph{frame bounds}) such that for all $x\in \mathbb{H}$,
$$ A \Vert x\Vert_{\mathbb{H}}^{2} \le \sum_{j\in J} \vert \langle x, e_{j}\rangle \vert^{2} \le B \Vert x\Vert_{\mathbb{H}}^{2}.$$

\noindent
A frame is said to be \emph{tight} when it is possible to choose $A=B$ and a tight frame for which $A=B=1$ is called a \emph{Parseval} frame.  If only the right-hand inequality holds, the collection $\lbrace e_{j} \rbrace_{j\in J}$ is called a \emph{Bessel system} and $B$ is called the Bessel bound.  

In this work, the Hilbert space of primary interest is $\ell^{2}(\mathbb{Z})$.  The \emph{Fourier transform} of $x\in \ell^{2}(\mathbb{Z})$ will be denoted by $\hat{x}$ and is defined by
\begin{equation*}
\hat{x}(\xi) = \sum_{n\in\mathbb{Z}} x(n) e^{-2\pi i n \xi}, \quad \text{a.e.} \; \xi \in \mathbb{R}.
\end{equation*}

\noindent
In many cases identities involving such Fourier transforms will be considered on $\mathbb{T}$, often identified with the interval $\lbrack -\half, \half )$.  The \emph{convolution} of sequences $x,y\in \ell^{2}(\mathbb{Z})$ is defined by
$$ (x*y)(k) = \sum_{n\in \mathbb{Z}} y(n) x(k-n), \quad k\in \mathbb{Z},$$

\noindent
which, under the Fourier transform, corresponds to $\widehat{x*y}(\xi) = \hat{x}(\xi) \hat{y}(\xi)$.  Three other operators on $\ell^{2}(\mathbb{Z})$ play a key role in this work.  The \emph{translation} operator $T$ acts on $x\in \ell^{2}(\mathbb{Z})$ by $Tx(k)=x(k-1)$, so that
$$\widehat{Tx}(\xi) = \hat{x}(\xi) e^{-2\pi i \xi}.$$

\noindent
The \emph{involution} of $x\in \ell^{2}$ is denoted by $\overline{x}$ and is defined by $\overline{x}(k) = \overline{x(-k)}$, so that $\hat{\overline{x}}(\xi) = \overline{\hat{x}(\xi)}$.  Finally, the \emph{upsampling} operator $U$ acts on $x\in \ell^{2}(\mathbb{Z})$ by
$$Ux(k) =  \begin{cases} x(m), & k=2m, \\ 0, & \text{otherwise}, \end{cases}$$

\noindent
which leads to $\widehat{Ux}(\xi) = \hat{x}(2\xi)$.

\subsection{Filter Bank Terminology}

The term \emph{filter} will refer to a sequence in $\ell^{2}(\mathbb{Z})$ that acts on a \emph{signal} in $\ell^{2}(\mathbb{Z})$ by convolution.  A generic signal will frequently be denoted by $x$.  The letter $h$ will be used exclusively to represent \emph{low-pass} filters, which are assumed to satisfy $\hat{h}(0)=1$ and $\hat{h}(\half)=0$.  Similarly, the letter $g$ will be reserved for \emph{high-pass} filters, which are assumed to satisfy $\hat{g}(0)=0$.  When multiple high-pass filters are employed, they will be denoted by $g^{1}, g^{2}, \ldots, g^{L}$, with $L\in \mathbb{N}$.  For $j\in \mathbb{N}$, the \emph{iterated low-pass filter of order $j$} associated with a low-pass filter $h$ is defined as
\begin{equation} \label{alpha-j}
h_{j} = h * Uh * \cdots * U^{j-1}h.
\end{equation}

\noindent
The \emph{iterated high-pass filters of order $j$} associated with low-pass filter $h$ and high-pass filters $g^{1}, g^{2}, \ldots, g^{L}$ are defined by
\begin{equation} \label{beta-j}
g^{\ell}_{j} = h * U h * \cdots * U^{j-2}h * U^{j-1} g^{\ell} = h_{j-1} * U^{j-1} g^{\ell}.
\end{equation}

\noindent
The \emph{filter bank analysis operator} associated with the filters $h, g^{1}, g^{2}, \ldots, g^{L} \in \ell^{2}(\mathbb{Z})$ and acting on $x\in \ell^{2}(\mathbb{Z})$ is the mapping
$$F:\ell^{2}(\mathbb{Z}) \rightarrow \bigoplus_{\underset{1\le \ell \le L}{j\in \mathbb{N}}} \ell^{2}(\mathbb{Z})$$

\noindent
given by
\begin{equation*}
F: x \mapsto Fx := \lbrace (Fx)_{j,\ell} \rbrace_{j\in \mathbb{N}, \, 1\le \ell \le L}.
\end{equation*}

\noindent
Referring to Figure \ref{SI-IFB-analysis}, observe that $(Fx)_{j,\ell}$ is given by
$$ (Fx)_{j,\ell}(k) = (x*\overline{g}^{\ell}_{j})(k) = \langle x, T^{k} g^{\ell}_{j} \rangle,$$

\noindent
so that the components of the filter bank analysis operator can be interpreted as the frame coefficients of $x$ relative to the integer translates of the iterated high pass filters.

\begin{definition} \label{stable-def}
The infinitely iterated shift-invariant filter bank defined by $h, g^{1}, g^{2}, \ldots, g^{L}$ is said to be \emph{stable} when the collection
$$ \lbrace T^{k} g^{\ell}_{j} : j\in \mathbb{N}, \; k\in \mathbb{Z}, \; 1\le \ell \le L \rbrace$$

\noindent
constitutes a frame for $\ell^{2}(\mathbb{Z})$. 
\end{definition}

The following equivalent formulation of stability will be used frequently in this work.  The infinitely iterated shift-invariant filter bank generated by $h, g^{1}, g^{2}, \ldots, g^{L}$ is stable if there exist constants $0<A\le B<\infty$ such that for all $x\in \ell^{2}(\mathbb{Z})$,
\begin{equation} \label{frame-def}
A \Vert x\Vert^{2} \le \sum_{\ell =1}^{L} \sum_{j=1}^{\infty} \Vert (Fx)_{j,\ell}\Vert^{2} \le B \Vert x \Vert^{2}.
\end{equation}

\noindent
An infinitely iterated filter bank for which only the upper bound of \eqref{frame-def} holds will be referred to as \emph{Bessel}.  Similarly, if it is possible to choose $A=B$ in \eqref{frame-def}, the infinitely iterated filter bank will be referred to as \emph{tight} and, if $A=B=1$, the infinitely iterated filter bank is said to be \emph{Parseval}.  When an infinitely iterated filter bank is Bessel, the filter bank synthesis operator
$$ F^{*}: \bigoplus_{\underset{1\le \ell \le L}{j\in \mathbb{N}}} \ell^{2}(\mathbb{Z}) \rightarrow \ell^{2}(\mathbb{Z})$$

\noindent
given by
$$ F^{*} \left (\lbrace c_{j,\ell} \rbrace_{j\in \mathbb{N}, 1\le \ell \le L} \right ) = \sum_{\ell=1}^{L} \sum_{j=1}^{\infty} c_{j,\ell}*g^{\ell}_{j}$$

\noindent
is well-defined and bounded.  As indicated by the notation, $F^{*}$, the synthesis operator is adjoint to the analysis operator $F$.  The schematic for the filter bank synthesis operator is depicted in Figure \ref{SI-IFB-synthesis}.

\begin{figure}[hbtp]
\centering
\begin{picture}(134,30)(2,8)
    \put (2.25,30){\makebox[6mm]{$\cdots$}}
	\put (8,31){\line(1,0){9}}
    \put (17,29){\exactframe{14mm}{7mm}}
    \put (17,30){\makebox[14mm]{$U^{j-1}h$}}
	\put (31,31){\line(1,0){4}}
    \put (37,31){\circle{4}}
	\put (35,30){\makebox[4mm]{$+$}}
	\put (39,31){\line(1,0){4}}
    \put (43.25,30){\makebox[6mm]{$\cdots$}}
	\put (49,31){\line(1,0){9}}
    \put (58,29){\exactframe{14mm}{7mm}}
    \put (58,30){\makebox[14mm]{$Uh$}}
	\put (72,31){\line(1,0){4}}
    \put (78,31){\circle{4}}
	\put (76,30){\makebox[4mm]{$+$}}
	\put (80,31){\line(1,0){13}}
    \put (93,29){\exactframe{14mm}{7mm}}
    \put (93,30){\makebox[14mm]{$h$}}
	\put (107,31){\line(1,0){4}}
    \put (113,31){\circle{4}}
	\put (111,30){\makebox[4mm]{$+$}}
	\put (115,31){\line(1,0){3}}
	\put (120,30){\makebox[14mm]{$F^{*}(\lbrace c_{j,\ell}\rbrace)$}}

	\put (7,20){\makebox[8mm]{$c_{j,1}$}}
	\put (15,21){\line(1,0){2}}
    \put (17,19){\exactframe{14mm}{7mm}}
    \put (17,20){\makebox[14mm]{$U^{j-1}g^{1}$}}
	\put (31,21){\line(1,0){4}}
	\put (37,23){\line(0,1){6}}
	\put (37,21){\circle{4}}
	\put (37,18){\line(0,1){1}}
	\put (35,20){\makebox[4mm]{$+$}}
	\put (48,20){\makebox[8mm]{$c_{2,1}$}}
	\put (56,21){\line(1,0){2}}
    \put (58,19){\exactframe{14mm}{7mm}}
    \put (58,20){\makebox[14mm]{$ Ug^{1}$}}
	\put (72,21){\line(1,0){4}}
	\put (78,23){\line(0,1){6}}
	\put (78,21){\circle{4}}
	\put (78,18){\line(0,1){1}}
	\put (76,20){\makebox[4mm]{$+$}}
	\put (83,20){\makebox[8mm]{$c_{1,1}$}}
	\put (91,21){\line(1,0){2}}
    \put (93,19){\exactframe{14mm}{7mm}}
    \put (93,20){\makebox[14mm]{$g^{1}$}}
	\put (107,21){\line(1,0){4}}
	\put (113,23){\line(0,1){6}}
	\put (113,21){\circle{4}}
	\put (113,18){\line(0,1){1}}
	\put (111,20){\makebox[4mm]{$+$}}
    
    \put (10,13){\makebox[2mm]{$\vdots$}}
    \put (36,13){\makebox[2mm]{$\vdots$}}
    \put (51,13){\makebox[2mm]{$\vdots$}}
    \put (77,13){\makebox[2mm]{$\vdots$}}
    \put (86,13){\makebox[2mm]{$\vdots$}}
    \put (112,13){\makebox[2mm]{$\vdots$}}

	\put (7,9){\makebox[8mm]{$c_{j,L}$}}
	\put (15,10){\line(1,0){2}}
    \put (17,8){\exactframe{14mm}{7mm}}
    \put (17,9){\makebox[14mm]{$U^{j-1}g^{L}$}}
	\put (31,10){\line(1,0){6}}
	\put (37,10){\line(0,1){2}}
	\put (48,9){\makebox[8mm]{$c_{2,L}$}}
	\put (56,10){\line(1,0){2}}
    \put (58,8){\exactframe{14mm}{7mm}}
    \put (58,9){\makebox[14mm]{$ Ug^{L}$}}
	\put (72,10){\line(1,0){6}}
	\put (78,10){\line(0,1){2}}
	\put (83,9){\makebox[8mm]{$c_{1,L}$}}
	\put (91,10){\line(1,0){2}}
    \put (93,8){\exactframe{14mm}{7mm}}
    \put (93,9){\makebox[14mm]{$g^{L}$}}
	\put (107,10){\line(1,0){6}}
	\put (113,10){\line(0,1){2}}

\end{picture}

\caption{Synthesis schematic for an infinitely iterated shift-invariant filter bank.} \label{SI-IFB-synthesis}
\end{figure}
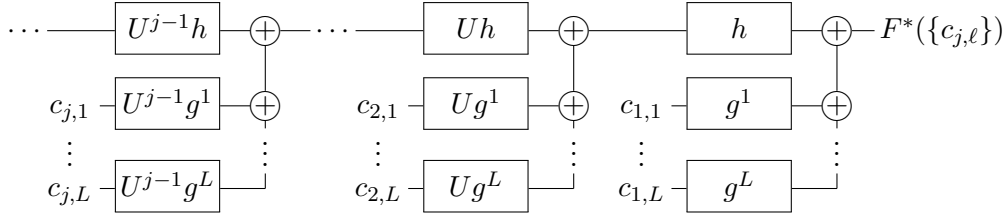

\section{Stability of Infinitely Iterated Shift-Invariant Filter Banks}

The following proposition describes a straightforward characterization of the frame bounds for an infinitely iterated shift-invariant filter bank in terms of the iterated high-pass filters.

\begin{proposition}[Characterization of Stability] \label{stable-char}
Let $h, g^{1}, g^{2}, \ldots, g^{L} \in \ell^{2}(\mathbb{Z})$ be low-pass and high-pass filters, respectively.  Fix $0<A\le B<\infty$.  The infinitely iterated shift-invariant filter bank associated with $h, g^{1}, g^{2}, \ldots, g^{L}$ is stable with frame bounds $A$ and $B$ if and only if
\begin{equation} \label{frame-char}
A\le \sum_{\ell=1}^{L} \sum_{j=1}^{\infty} \vert \hat{g}^{\ell}_{j}(\xi) \vert^{2} \le B, \quad \text{a.e.} \; \xi \in \mathbb{T}.
\end{equation}

\noindent
Moreover, the infinitely iterated shift-invariant filter bank associated with $h, g^{1}, g^{2}, \ldots, g^{L}$ is Bessel with bound $B$ if and only if the right-hand inequality of \eqref{frame-char} holds.
\end{proposition}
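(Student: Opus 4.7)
The plan is to convert the stability inequalities into pointwise a.e. inequalities on the circle by applying Plancherel to the analysis coefficients. Since $(Fx)_{j,\ell}(k) = (x * \overline{g}^{\ell}_{j})(k)$, and the Fourier transform is a unitary map $\ell^{2}(\mathbb{Z}) \to L^{2}(\mathbb{T})$ that intertwines convolution with multiplication, Plancherel gives
\begin{equation*}
\Vert (Fx)_{j,\ell} \Vert^{2} = \int_{\mathbb{T}} \vert \hat{x}(\xi) \vert^{2} \vert \hat{g}^{\ell}_{j}(\xi)\vert^{2}\, d\xi,
\end{equation*}
where I used that $\hat{\overline{g}^{\ell}_{j}}(\xi) = \overline{\hat{g}^{\ell}_{j}(\xi)}$ has the same modulus as $\hat{g}^{\ell}_{j}(\xi)$. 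Because the integrand is non-negative, Tonelli's theorem allows the double sum over $(j,\ell)$ to be interchanged with the integral, producing
\begin{equation*}
\sum_{\ell=1}^{L} \sum_{j=1}^{\infty} \Vert (Fx)_{j,\ell}\Vert^{2} = \int_{\mathbb{T}} \vert \hat{x}(\xi) \vert^{2} \, \Phi(\xi)\, d\xi, \qquad \Phi(\xi) := \sum_{\ell=1}^{L} \sum_{j=1}^{\infty} \vert \hat{g}^{\ell}_{j}(\xi) \vert^{2},
\end{equation*}
where $\Phi$ is a well-defined $[0,\infty]$-valued measurable function.

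The Bessel and frame inequalities \eqref{frame-def} then become
\begin{equation*}
A \int_{\mathbb{T}} \vert \hat{x}(\xi)\vert^{2}\, d\xi \;\le\; \int_{\mathbb{T}} \vert \hat{x}(\xi)\vert^{2} \Phi(\xi) \,d\xi \;\le\; B \int_{\mathbb{T}} \vert \hat{x}(\xi)\vert^{2}\, d\xi,
\end{equation*}
again by Plancherel. The sufficiency of \eqref{frame-char} is now immediate: if $A \le \Phi(\xi) \le B$ a.e., multiplying by $\vert \hat{x}(\xi)\vert^{2}$ and integrating yields \eqref{frame-def} for every $x \in \ell^{2}(\mathbb{Z})$.

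For the converse, I would argue contrapositively by testing the inequalities against trigonometric polynomials supported on small spectral windows. The Fourier transform sends $\ell^{2}(\mathbb{Z})$ onto all of $L^{2}(\mathbb{T})$, so I can choose $x$ with $\hat{x} = \vert E\vert^{-1/2} \mathbbm{1}_{E}$ for any measurable $E \subset \mathbb{T}$ with $0 < \vert E\vert$. Then $\Vert x\Vert^{2} = 1$ and the middle quantity above equals $\vert E\vert^{-1} \int_{E} \Phi \, d\xi$, the average of $\Phi$ over $E$. If $\Phi(\xi) > B$ on a set of positive measure, restricting $E$ to such a set violates the upper bound; similarly, if $\Phi(\xi) < A$ on a positive-measure set, the lower bound fails. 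This gives \eqref{frame-char} a.e.\ and, as a separate statement, the Bessel claim (by running only the upper inequality through the same argument).

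The only mildly delicate point is justifying the interchange of sum and integral and ensuring $\Phi$ is measurable: both follow from Tonelli and the fact that each $\vert \hat{g}^{\ell}_{j}\vert^{2}$ is a continuous (hence measurable) non-negative function on $\mathbb{T}$ when $g^{\ell}_{j} \in \ell^{1}(\mathbb{Z})$, and more generally from monotone convergence on the partial sums. No deep obstacle is expected; this is a standard shift-invariant-system computation, and the main content is simply keeping track of Plancherel and the a.e.\ interpretation.
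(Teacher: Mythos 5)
Your proposal is correct and follows essentially the same route as the paper: Plancherel plus Fubini--Tonelli to rewrite $\Vert Fx\Vert^{2}$ as $\int_{\mathbb{T}} \vert \hat{x}(\xi)\vert^{2}\,\Phi(\xi)\,d\xi$, from which both directions follow. Your explicit testing with $\hat{x}=\vert E\vert^{-1/2}\mathbbm{1}_{E}$ on sets where $\Phi>B$ or $\Phi<A$ simply fills in the converse step that the paper leaves as an immediate observation, so there is nothing substantively different to compare.
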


\begin{proof}
Let $x\in \ell^{2}(\mathbb{Z})$.  It follows from the Fubini-Tonelli theorem that
$$ \Vert Fx\Vert^{2} = \sum_{\ell=1}^{L} \sum_{j=1}^{\infty} \Vert (Fx)_{j,\ell}\Vert^{2} = \sum_{\ell=1}^{L} \sum_{j=1}^{\infty} \int_{\mathbb{T}} \vert \hat{g}_{\ell}(\xi) \vert^{2} \vert \hat{x}(\xi)\vert^{2} \; d\xi = \int_{\mathbb{T}} \left (\sum_{\ell=1}^{L} \sum_{j=1}^{\infty} \vert \hat{g}_{\ell}(\xi) \vert^{2} \right ) \vert \hat{x}(\xi)\vert^{2} \; d\xi.$$

\noindent
It follows that 
$$ A \Vert x\Vert^{2} \le \Vert Fx\Vert^{2} \le B \Vert x\Vert^{2} \quad \text{for all} \quad x\in \ell^{2}(\mathbb{Z})$$

\noindent
if and only if \eqref{frame-char} holds.  It is straightforward to observe that $\Vert F x\Vert^{2} \le B \Vert x\Vert^{2}$ for all $x\in \ell^{2}(\mathbb{Z})$ if and only if the right-hand inequality of \eqref{frame-char} holds.
\end{proof}

\begin{remark} \label{upsampledF}
Suppose that the infinitely iterated shift-invariant filter bank associated with filters $h, g^{1}, g^{2}, \ldots, g^{L}$ is stable with bounds $A$ and $B$.  Fix $j_{0}\in \mathbb{N}$ and define $\tilde{h}=U^{j_{0}}h$ and $\tilde{g}^{\ell}=U^{j_{0}}g^{\ell}$, $1\le \ell \le L$, and observe that
$$ \sum_{\ell=1}^{L} \sum_{j=1}^{\infty} \vert \hat{\tilde{g}}^{\ell}_{j} (\xi) \vert^{2} = \sum_{\ell=1}^{L} \sum_{j=1}^{\infty} \vert \hat{g}^{\ell}_{j} (2^{j_{0}}\xi) \vert^{2}.$$

\noindent
It follows that the infinitely iterated shift-invariant filter bank associated with $\tilde{h}, \tilde{g}^{1}, \tilde{g}^{2}, \ldots, \tilde{g}^{L}$ is also stable with bounds $A$ and $B$.  This fact will be useful in Section \ref{FIFBstability}.
\end{remark}

The \emph{ideal} filters given by
$$ \hat{h}(\xi) = \mathbbm{1}_{\lbrack -\frac{1}{4}, \frac{1}{4}\rbrack} (\xi) \qquad \text{and} \qquad \hat{g}(\xi) =  \mathbbm{1}_{\lbrack -\frac{1}{2},-\frac{1}{4}\rbrack \cup \lbrack \frac{1}{4}, \frac{1}{2}\rbrack} (\xi)$$

\noindent
lead to the iterated high-pass filters $\hat{g}_{j}(\xi) =  \mathbbm{1}_{\mathbb{A}_{j}}$, where
$$ \mathbb{A}_{j} = \lbrace \xi \in \mathbb{T} : 2^{-(j+1)}< \vert \xi\vert \le 2^{-j}\rbrace, \quad j \in \mathbb{N}.$$

\noindent
It is an easy consequence of Proposition \ref{stable-char} that the iterated filter bank corresponding to the ideal filters $h$ and $g$ constitutes a Parseval frame for $\ell^{2}(\mathbb{Z})$.  The downside of the ideal filters is obvious -- the sequences $h$ and $g$ are not finitely supported, making them unsuitable for use in most applications.  In a certain sense, the ideal iterated high-pass filters can be interpreted as a diagonal solution of
$$ \sum_{j=1}^{\infty} \vert \hat{g}_{j}(\xi)\vert^{2} = 1, \quad \text{a.e.} \; \xi \in \mathbb{T},$$

\noindent
with respect to the dyadic annuli $\mathbb{A}_{j}$, $\ell \in \mathbb{N}$.  Therefore, a natural generalization of interest in applications might seek diagonally dominant solutions that employ finitely supported filters $h$ and $g$.  This work focuses on the class of finitely supported low-pass filters of the form
\begin{equation} \label{haar-type}
\hat{h}(\xi) = \left \lbrack \frac{1+e^{2\pi i\xi}}{2} \right \rbrack^{n} p(\xi),
\end{equation}

\noindent
where $n\in \mathbb{N}$ and $p(\xi)$ a trigonometric polynomial satisfying $p(0)=1$.  Observe that low-pass filters of this from necessarily satisfy $\hat{h}(0)=1$ and $\hat{h}(\half)=0$.  High-pass filters considered here will also be finitely supported and will be assumed to satisfy $\hat{g}(0)=0$.

\subsection{Sufficient Condition for a Bessel Bound}

The following lemma (see \cite{BJME2022} for a proof) is adapted from many constructions of both orthonormal and biorthogonal wavelets \cite{Daubechies1992, Cohen1992, CohenDaubechiesFeauveau1992}.  The estimate provides a bound on the modulus of the iterated low-pass filter, which is crucial to the derivation of Bessel bounds for an infinitely iterated shift-invariant filter bank using a low-pass filter of the form \eqref{haar-type}.

\begin{lemma} \label{sine-lemma}
Let $J$ be a positive integer, then for $\xi \in \lbrack -\half,\half\rbrack$, 
\begin{equation} \label{sine-product}
\left \vert \prod_{k=0}^{J-1} \frac{1+e^{2\pi i 2^{k}\xi}}{2} \right \vert \le \min{\left \lbrace 1, \frac{1}{2^{J+1} \vert \xi \vert} \right \rbrace}.
\end{equation}
\end{lemma}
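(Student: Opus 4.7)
The plan is to rewrite the product in terms of cosines and then collapse it via the standard telescoping identity. Since
$$ \frac{1+e^{2\pi i\theta}}{2} = e^{\pi i \theta} \cos(\pi\theta), $$
each factor has modulus $|\cos(\pi 2^{k}\xi)|$, so
$$ \left\vert \prod_{k=0}^{J-1} \frac{1+e^{2\pi i 2^{k}\xi}}{2} \right\vert = \prod_{k=0}^{J-1} \vert \cos(\pi 2^{k}\xi) \vert. $$
The bound by $1$ is then immediate, since each cosine factor has modulus at most $1$.

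For the second bound, I would invoke the classical identity
$$ \prod_{k=0}^{J-1} \cos(2^{k}\theta) = \frac{\sin(2^{J}\theta)}{2^{J}\sin\theta}, $$
valid whenever $\sin\theta \ne 0$. This follows by iterating the double-angle formula $\sin(2t)=2\sin t\cos t$, a short induction on $J$. Setting $\theta = \pi\xi$ gives
$$ \prod_{k=0}^{J-1} \vert \cos(\pi 2^{k}\xi) \vert = \frac{\vert \sin(2^{J}\pi\xi)\vert}{2^{J}\vert \sin(\pi\xi)\vert}. $$
Bounding the numerator by $1$ and applying the elementary inequality $\vert \sin(\pi\xi)\vert \ge 2\vert \xi\vert$ for $\xi \in [-\tfrac{1}{2},\tfrac{1}{2}]$ (a consequence of the concavity of sine on $[0,\pi/2]$, equivalent to Jordan's inequality) yields
$$ \frac{\vert \sin(2^{J}\pi\xi)\vert}{2^{J}\vert \sin(\pi\xi)\vert} \le \frac{1}{2^{J}\cdot 2\vert \xi\vert} = \frac{1}{2^{J+1}\vert \xi\vert}. $$
Combining the two estimates produces the stated minimum.

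There is essentially no obstacle; the only mild subtlety is the point $\xi = 0$, where the quotient form of the identity is undefined. This is harmless: at $\xi = 0$ the original product equals $1$, which agrees with the right-hand side $\min\{1,\infty\}=1$, so we may simply restrict the telescoping argument to $\xi \ne 0$ and read off the $\xi = 0$ case from the trivial bound.
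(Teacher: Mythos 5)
Your proof is correct and follows the standard route: the telescoping identity $\prod_{k=0}^{J-1}\cos(2^{k}\theta)=\sin(2^{J}\theta)/(2^{J}\sin\theta)$ together with the chord (Jordan-type) bound $\vert\sin(\pi\xi)\vert\ge 2\vert\xi\vert$ on $[-\tfrac12,\tfrac12]$, which is exactly the technique behind this lemma (the paper defers the proof to \cite{BJME2022}, and the same sine-product identity with the secant estimate reappears in the proof of Theorem \ref{frame-bound}). Your handling of the trivial bound and of the point $\xi=0$ is fine, so there is nothing to add.
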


It is evident from \eqref{sine-product} that the cosine factor in \eqref{haar-type} leads to a natural decay for $\vert \hat{h}_{J} \vert$ on the annuli $\mathbb{A}_{j}$ with $j \le J$.  The next result focuses on the second factor, $p(\xi)$, in \eqref{haar-type}, which must be controlled adequately for the iterated filter bank to yield a Bessel bound.  This lemma is also proven in \cite{BJME2022}.

\begin{lemma} \label{p-lemma}
Let $p(\xi)$ be a trigonometric polynomial satisfying $p(0)=1$.  Assume that there exists $s\in \mathbb{N}$ and $\varepsilon >0$ such that
\begin{equation} \label{p-condition}
\sup_{\xi \in \mathbb{\mathbb{R}}} \left \vert \prod_{k=0}^{s-1}  p(2^{k}\xi) \right \vert \le 2^{(n-\varepsilon)s}.
\end{equation}

\noindent
Then, there exists $C_{1}>0$ such that for all $j,J \in \mathbb{N}$,
\begin{equation} \label{Bessel-est}
\sup_{\xi \in \mathbb{A}_{j}} \left \vert \prod_{k=0}^{J-1} p(2^{k} \xi) \right \vert \le \begin{cases} C_{1} 2^{(J-j)(n-\varepsilon)}, & 1\le j \le J, \\ C_{1}, & j > J. \end{cases} 
\end{equation}
\end{lemma}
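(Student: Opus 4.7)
The plan is to split the product into a ``low-frequency'' piece, where $|2^{k}\xi|\le 1/2$, and a ``high-frequency'' piece, where one extracts decay from the hypothesis \eqref{p-condition}.  For $\xi\in \mathbb{A}_{j}$ one has $|2^{k}\xi|\le 2^{k-j}$, so the natural split is at the index $k=\min\{j,J\}$: the factors indexed $k<j$ all lie in $[-1/2,1/2]$, while the remaining factors (present only when $j\le J$) can be grouped into blocks of length $s$ on which \eqref{p-condition} applies directly after a dilation of $\xi$.

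For the low-frequency piece, since $p$ is a trigonometric polynomial with $p(0)=1$, it satisfies a Lipschitz bound $|p(\eta)|\le 1+C|\eta|$ on $[-1/2,1/2]$ with $C=\|p'\|_{\infty}$.  Using $1+x\le e^{x}$ together with the geometric sum $\sum_{k=0}^{j-1}2^{k-j}\le 1$, one obtains
\begin{equation*}
\prod_{k=0}^{\min\{j,J\}-1}|p(2^{k}\xi)|\le \exp\!\left(C\sum_{k=0}^{\min\{j,J\}-1}2^{k-j}\right)\le e^{C},
\end{equation*}
uniformly in $j$ and $J$.  In particular this already establishes the second case of \eqref{Bessel-est} when $j>J$, since then no high-frequency factors remain.

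For the case $1\le j\le J$, write $J-j=qs+r$ with $0\le r<s$ and regroup the high-frequency piece:
\begin{equation*}
\prod_{k=j}^{J-1}|p(2^{k}\xi)|=\prod_{m=0}^{q-1}\prod_{i=0}^{s-1}\bigl|p\bigl(2^{i}(2^{j+ms}\xi)\bigr)\bigr|\cdot\prod_{i=qs}^{J-j-1}|p(2^{j+i}\xi)|.
\end{equation*}
Each of the $q$ inner products is bounded by $2^{(n-\varepsilon)s}$ via \eqref{p-condition}, and the residual of $r<s$ factors is bounded by $M^{s-1}$, where $M=\sup_{\xi}|p(\xi)|<\infty$.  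Using $sq=J-j-r$, the two bounds combine into a constant multiple of $2^{(n-\varepsilon)(J-j)}$; multiplying by the low-frequency estimate yields \eqref{Bessel-est} with $C_{1}$ depending only on $p$, $s$, $n$, and $\varepsilon$.  The main obstacle is really just bookkeeping: the block decomposition must be organized so that the residual factors, together with the leftover exponent $-(n-\varepsilon)r$, combine into a universal constant independent of $j$ and $J$.  The analytic content — the Lipschitz behaviour of $p$ near the origin and the hypothesis \eqref{p-condition} itself — is otherwise routine.
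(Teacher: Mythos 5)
Your argument is correct and complete. There is nothing in the paper to compare it against: the authors do not prove Lemma \ref{p-lemma} in the text but defer it to \cite{BJME2022}, and your route---splitting the product at the index $k=\min\{j,J\}$, bounding the low-frequency factors by $|p(\eta)|\le 1+\|p'\|_{\infty}|\eta|\le e^{\|p'\|_{\infty}|\eta|}$ and summing the geometric series $\sum_{k<j}2^{k-j}\le 1$, then decomposing the remaining $J-j$ factors into blocks of length $s$ on which \eqref{p-condition} applies---is exactly the standard argument used in such references. Two small points that make your bookkeeping close up as claimed: since $p(0)=1$ one has $M=\sup_{\xi}|p(\xi)|\ge 1$, so the residual $r<s$ factors are indeed at most $M^{s-1}$; and the leftover factor $2^{-(n-\varepsilon)r}$ is bounded by a constant depending only on $s$, $n$, $\varepsilon$ regardless of the sign of $n-\varepsilon$ (in fact, evaluating \eqref{p-condition} at $\xi=0$ shows $n-\varepsilon\ge 0$, so it is at most $1$).
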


\begin{remark}
Notice that when $s=1$, \eqref{p-condition} reduces to $\vert p(\xi) \vert \le 2^{n-\varepsilon}$.  This simple condition is sufficient for many well-known constructions, e.g., the low-pass filters studied by Burt and Adelson \cite{BurtAdelson1983}.
\end{remark}

A Bessel bound for a specific class of infinitely iterated shift-invariant filter banks is produced by combining the estimates of Lemmas \ref{sine-lemma} and \ref{p-lemma}.

\begin{theorem} \label{Bessel-bound}
Let $h\in \ell^{2}(\mathbb{Z})$ be a finitely supported low-pass filter of the form \eqref{haar-type}, where $p(\xi)$ is a trigonometric polynomial satisfying $p(0)=1$.  Let $g^{1}, g^{2}, \ldots, g^{L} \in \ell^{2}(\mathbb{Z})$ be finitely supported high-pass filters satisfying $\hat{g}^{\ell}(0)=0$, $1\le \ell \le L$.  If there exists $\varepsilon >0$ and $s\in \mathbb{N}$ such that \eqref{p-condition} holds, then
\begin{enumerate}[leftmargin=0.3275in,itemsep=0.125in,label = (\alph*)]
\item the infinitely iterated shift-invariant filter bank associated with $h, g^{1}, g^{2}, \ldots, g^{L}$ is Bessel and
\item for each $x\in \ell^{2}(\mathbb{Z})$, $\Vert x * \overline{h}_{j} \Vert \rightarrow 0$ as $j\rightarrow \infty$.
\end{enumerate}
\end{theorem}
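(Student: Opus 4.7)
The plan is to invoke Proposition~\ref{stable-char} to reduce part (a) to the pointwise bound $\sum_{\ell,j}|\hat{g}^\ell_j(\xi)|^2 \le B$ a.e.\ on $\mathbb{T}$, and then derive part (b) from a dominated convergence argument driven by the same pointwise estimates on the iterated low-pass filters.  The backbone in both parts is the factorization $\hat{g}^\ell_j(\xi) = \hat{h}_{j-1}(\xi)\,\hat{g}^\ell(2^{j-1}\xi)$ coming from \eqref{beta-j}, combined with Lemmas~\ref{sine-lemma} and \ref{p-lemma} applied to the two factors of $\hat{h}$.

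For part (a), I would fix an annulus $\mathbb{A}_m$, take $\xi \in \mathbb{A}_m$, and bound $|\hat{h}_{j-1}(\xi)|$ by applying Lemma~\ref{sine-lemma} to the $n$-th power of the cosine factor and Lemma~\ref{p-lemma} to the $p$-factor, both with $J=j-1$.  The key cancellation is that the $n$ produced by the cosine factor matches the $n$ in the growth rate $n-\varepsilon$ coming from \eqref{p-condition}, which should yield
\[
|\hat{h}_{j-1}(\xi)|^2 \le \begin{cases} C_1^{2}, & 1 \le j \le m, \\ C_1^{2}\,2^{-2\varepsilon(j-m-1)}, & j \ge m+1. \end{cases}
\]
Since each $\hat{g}^\ell$ is a trigonometric polynomial with $\hat{g}^\ell(0)=0$, I would factor $\hat{g}^\ell(\eta) = (e^{-2\pi i\eta}-1)\,q^\ell(\eta)$ for a trigonometric polynomial $q^\ell$, obtaining $|\hat{g}^\ell(\eta)| \le 2\pi|\eta|\,\|q^\ell\|_\infty$ for $\eta \in [-\half,\half]$.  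For $1\le j \le m$ one has $|2^{j-1}\xi|\le 2^{j-m-1}\le \half$, which gives $|\hat{g}^\ell(2^{j-1}\xi)|^2 \le \pi^2 \|q^\ell\|_\infty^2\,2^{2(j-m)}$, while for $j>m$ the crude bound $|\hat{g}^\ell(2^{j-1}\xi)|^2 \le \|\hat{g}^\ell\|_\infty^2$ suffices.  Assembling these two pieces produces two convergent geometric series, one in $j\le m$ with ratio $1/4$ and one in $j>m$ with ratio $2^{-2\varepsilon}$, both bounded independently of $m$; summing over $\ell$ and invoking Proposition~\ref{stable-char} yields (a).

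For part (b), Parseval gives $\|x*\overline{h}_j\|^{2} = \int_{\mathbb{T}}|\hat{x}(\xi)|^{2}|\hat{h}_j(\xi)|^{2}\,d\xi$.  Rerunning the two-lemma estimate with $J = j$ produces $|\hat{h}_j(\xi)|^{2} \le C_1^{2}$ uniformly on $\mathbb{T}\setminus\{0\}$, together with the pointwise decay $|\hat{h}_j(\xi)|^{2} \le C_1^{2}\,2^{-2\varepsilon(j-m)} \to 0$ as $j \to \infty$ for each fixed $\xi \in \mathbb{A}_m$.  Dominated convergence with the $L^{1}$ majorant $C_1^{2}|\hat{x}|^{2}$ then closes the argument.

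The delicate step is the bookkeeping in the two-case estimate on $\hat{h}_{j-1}$: the $n$-th power of the sine bound from Lemma~\ref{sine-lemma} and the $n$ hidden inside the exponent $n-\varepsilon$ of Lemma~\ref{p-lemma} have to cancel exactly so that only the genuine decay rate $2^{-\varepsilon}$ per annulus survives on the far-field range $j>m$, while on $j\le m$ both factors collapse to constants and the required decay must be supplied by $\hat{g}^\ell$ via the factor $(e^{-2\pi i\eta}-1)$.  Once this cancellation and the matching split in the $\hat{g}^\ell$ factor are verified, the remainder reduces to routine geometric-series manipulations.
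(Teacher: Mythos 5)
Your proposal is correct and follows essentially the same route as the paper: part (a) uses the identical factorization $\hat{g}^{\ell}_{j}=\hat{h}_{j-1}\cdot\hat{g}^{\ell}(2^{j-1}\cdot)$, Lemmas \ref{sine-lemma} and \ref{p-lemma} on the two factors of $\hat{h}$, the same two-regime split on the dyadic annuli (linear vanishing of $\hat{g}^{\ell}$ at the origin where the filter order does not exceed the annulus index, the $\varepsilon$-decay of the iterated low-pass filter elsewhere), and geometric series bounded independently of the annulus, followed by Proposition \ref{stable-char}. Your part (b) replaces the paper's explicit tail-splitting with a dominated convergence argument built on the same pointwise estimates, which is a minor (and clean) variation rather than a different method.
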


\begin{proof}
The proof for each statement relies on estimates of the modulus of the iterated low- and high-pass filters on the dyadic annuli $\mathbb{A}_{j}$, $j \in \mathbb{N}$.  Estimates involving the high-pass filters will not depend on $\ell$ and thus it may be assumed that $L=1$ without loss of generality.  The high-pass filter will be denoted simply by $g$ rather than $g^{1}$ for simplicity.

\medskip
In light of \eqref{alpha-j}, the Fourier transform of the iterated low-pass filter of order $J-1$ is given by
$$ \hat{h}_{J-1}(\xi) = \prod_{k=0}^{J-2} \hat{h} (2^{k} \xi) = \prod_{k=0}^{J-2} \left \lbrack \frac{1+e^{2\pi i2^{k} \xi}}{2} \right \rbrack^{n}  \times \prod_{k=0}^{J-2} p(2^{k} \xi).$$

\noindent
Lemma \ref{sine-lemma} implies that
$$ \sup_{\xi \in \mathbb{A}_{j}} \prod_{k=0}^{J-2} \left \vert \frac{1+e^{2\pi i2^{k} \xi}}{2} \right \vert^{n} \le \begin{cases} 2^{(j+1-J)n}, & 1\le j \le J-1, \\ 1, & j \ge J, \end{cases} $$

\noindent
while Lemma \ref{p-lemma} provides the estimates
$$ \sup_{\xi \in \mathbb{A}_{j}} \left \vert \prod_{k=0}^{J-2} p(2^{k} \xi) \right \vert \le \begin{cases} C_{1} 2^{(J-j-1)(n-\varepsilon)}, & 1\le j \le J-1, \\ C_{1}, & j \ge J. \end{cases} $$

\noindent
Combining the two estimates leads to
\begin{equation} \label{alpha-j-est}
\sup_{\xi \in \mathbb{A}_{j}} \left \vert \hat{h}_{J-1}(\xi) \right \vert \le \begin{cases} C_{1} 2^{(j+1-J)\varepsilon}, & 1\le j \le J-1, \\ C_{1}, & j \ge J. \end{cases}
\end{equation}

\noindent
It follows from \eqref{beta-j} that the Fourier transform of the iterated high-pass filter of order $J$ is given by $\hat{g}_{J}(\xi) = \hat{g}(2^{J-1}\xi) \hat{h}_{J-1}(\xi)$.  The fact that $\hat{g}(0)=0$ leads to the estimate $\vert \hat{g}(\xi)\vert \le C_{2} \vert \xi \vert$ for $\xi \in \mathbb{T}$.  Moreover, evaluating at $2^{J-1}\xi$ leads to $\vert \hat{g}(2^{J-1}\xi)\vert \le C_{2} 2^{J-1} \vert \xi \vert$.  This estimate is useful on the annuli $\mathbb{A}_{j}$, $j \ge J$, while the simpler estimate $\vert \hat{g}(\xi)\vert \le C_{2}$ will be used for the annuli $\mathbb{A}_{j}$, $1\le j \le J-1$.  The resulting estimates for the iterated high-pass filter of order $J$ are thus
$$ \sup_{\xi \in \mathbb{A}_{j}} \vert \hat{g}_{J}(\xi)\vert \le \begin{cases} C_{1} C_{2} 2^{(j+1-J)\varepsilon}, & 1\le j \le J-1, \\ C_{1} C_{2} 2^{J-j-1}, & j \ge J. \end{cases}$$

\noindent
Letting $C_{3}=(C_{1}C_{2})^{2}$, it follows that for $\xi \in \mathbb{A}_{j}$,
\begin{align*}
\sum_{J=1}^{\infty} \vert \hat{g}_{J}(\xi)\vert^{2} &\le  \sum_{J=1}^{j} C_{3} 4^{J-j-1} + \sum_{J=j+1}^{\infty} C_{3} 4^{(j+1-J)\varepsilon} \\
&= \sum_{J=1}^{j} C_{3} 4^{-J} + \sum_{J=0}^{\infty} C_{3} \left ( 4^{\varepsilon} \right )^{-J},
\end{align*}

\noindent
which is bounded independently of $j$.  By Proposition \ref{stable-char}, the infinitely iterated shift-invariant filter bank associated with $h$ and $g$ is Bessel, completing the proof of (a).

\medskip
Now let $x\in \ell^{2}(\mathbb{Z})$ and fix $\delta >0$.  Observe that
$$ \Vert x\Vert^{2} = \sum_{j \in \mathbb{N}} \left ( \int_{\mathbb{A}_{j}} \vert \hat{x}(\xi)\vert^{2} \; d \xi \right ),$$

\noindent
so there exists $j_{0}\in \mathbb{N}$ such that 
$$ \sum_{j \ge j_{0}} \left ( \int_{\mathbb{A}_{j}} \vert \hat{x}(\xi)\vert^{2} \; d \xi \right ) < \delta.$$

\noindent
Moreover, it follows from \eqref{alpha-j-est} that $\vert \hat{h}_{J}(\xi)\vert \le C_{4} 2^{-J\varepsilon}$ for $\xi \in \mathbb{A}_{j}$, $j \le J$.  Similarly, $\vert \hat{h}_{J}(\xi)\vert \le C_{1}$ for $\xi \in \mathbb{A}_{j}$, $j \ge J$.  Therefore, assuming that $J\ge j_{0}$,
\begin{align*}
\Vert x* \overline{h}_{J}\Vert^{2} &= \int_{\mathbb{T}} \vert \hat{x}(\xi)\vert^{2} \, \vert \hat{h}_{J}(\xi)\vert^{2} \; d\xi \\
&= \sum_{j=1}^{J} \int_{\mathbb{A}_{j}} \vert \hat{x}(\xi)\vert^{2} \, \vert \hat{h}_{J}(\xi)\vert^{2} \; d\xi + \sum_{j\ge J} \int_{\mathbb{A}_{j}} \vert \hat{x}(\xi)\vert^{2} \, \vert \hat{h}_{J}(\xi)\vert^{2} \; d\xi \\
&\le \sum_{j=1}^{J} \int_{\mathbb{A}_{j}} C_{4}^{2} 4^{-J\varepsilon} \vert \hat{x}(\xi)\vert^{2} \; d\xi + \sum_{j\ge j_{0}} \int_{\mathbb{A}_{j}} C_{1}^{2} \vert \hat{x}(\xi)\vert^{2} \; d\xi \\
&\le C_{4}^{2} 4^{-J\varepsilon} \Vert x\Vert^{2} + C_{1}^{2} \delta.
\end{align*}

\noindent
The fact that $\delta >0$ was chosen arbitrarily now implies that $\Vert x*\overline{h}_{J}\Vert \rightarrow 0$ as $J\rightarrow \infty$, completing the proof of (b).
\end{proof}

It is not difficult to construct filter banks of the form \eqref{haar-type} that fail to be Bessel when $\eqref{p-condition}$ is not satisfied, as illustrated by the following example.

\begin{example}
Let $h$ be a finitely supported filter such that
$$ \hat{h}(\xi) = \half (1+e^{2\pi i \xi}) p(\xi),$$

\noindent
and assume that $p$ is an even polynomial satisfying $p(\frac{1}{6})>0$ and $p(\frac{1}{3})>2^{1+\varepsilon}$ for some $\varepsilon >0$.  Let $g$ be defined according to
$$ \hat{g}(\xi) = \overline{\hat{h}(\xi+\half)} e^{-2\pi i \xi}.$$

\noindent
It follows that
\begin{align*}
\vert \hat{h}_{j-1} (1/3) \vert &= \left \vert \prod_{k=0}^{j-2} \frac{1+e^{\frac{2^{k+1}}{3}\pi i}}{2} \right \vert \times \left \vert \prod_{k=0}^{j-2} p(2^{k}/3) \right \vert  \\
&= \left \vert \frac{\sin{(2^{j-1}\pi/3)}}{2^{j-1} \sin{(\pi/3)}} \right \vert \, \vert p(1/3)\vert^{j-1} \\
&> 2^{-(j-1)} 2^{(j-1)(1+\varepsilon)} \\
&= 2^{(j-1)\varepsilon}
\end{align*}

\noindent
and $\vert \hat{g}_{j}(1/3)\vert = \vert \hat{h}_{j-1} (1/3) \vert \, \vert \hat{g}(2^{j-1}/3) \vert > 2^{(j-1)\varepsilon} \sin{(\pi/3)} \vert p(1/6)\vert$.  Consequently, the series
$$ \sum_{j=1}^{\infty} \vert \hat{g}_{j}(1/3)\vert^{2}$$

\noindent
is divergent and for any $M>0$ there is a partial sum that will exceed $M$ on a set of positive measure.  Therefore, by Proposition \ref{stable-char}, the corresponding infinitely iterated shift-invariant filter bank is not Bessel.  The choice $p(\xi) = 2-\cos{(2\pi \xi)} = -\half e^{-2\pi i \xi} + 2 - \half e^{2\pi i \xi}$ provides a concrete example of a trigonometric polynomial $p(\xi)$ satisfying the above conditions.
\end{example}

\subsection{Sufficient Condition for a Lower Frame Bound}

The following counterpart to Theorem \ref{Bessel-bound} describes sufficient conditions for the stability of an infinitely iterated shift-invariant filter bank.

\begin{theorem} \label{frame-bound}
Let $h, g^{1}, g^{2}, \ldots, g^{L} \in \ell^{2}(\mathbb{Z})$.  Assume that $h$ is of the form \eqref{haar-type} where $p$ is a trigonometric polynomial satisfying $p(0)=1$ and the infinitely iterated shift-invariant filter bank associated with $h, g^{1}, g^{2}, \ldots, g^{L}$ is Bessel.  If there exist positive constants $a$, $\delta$, $p_{0}$, and $q_{0}$ such that
\begin{enumerate}[leftmargin=0.5in,itemsep=0.125in,label = (\roman*)]
\item $\vert p(\xi)\vert \ge p_{0}$ for $\vert \xi \vert \le \quarter$,

\item $\vert p(\xi)\vert \ge 1-a\vert \xi \vert$ for $\vert \xi \vert < \delta \le \frac{1}{2a}$, and

\item $\displaystyle{ \max_{1\le \ell \le L} \vert \hat{g}^{\ell}(\xi)\vert \ge q_{0} }$ for $\quarter \le \vert \xi \vert \le \half$,
\end{enumerate}

\noindent
then the infinitely iterated shift-invariant filter bank generated by $h, g^{1}, g^{2}, \ldots, g^{L}$ is stable.
\end{theorem}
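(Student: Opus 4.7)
By Proposition \ref{stable-char}, since the iterated filter bank is already Bessel by hypothesis, it suffices to establish a uniform positive lower bound $\sum_{\ell=1}^{L}\sum_{j=1}^{\infty}|\hat{g}^{\ell}_{j}(\xi)|^{2}\ge A$ for a.e.\ $\xi\in\mathbb{T}$.  My plan is to exhibit a \emph{single} term of this sum that is bounded below by a constant on each dyadic annulus $\mathbb{A}_{j}$.  Since $\{\mathbb{A}_{j}\}_{j\in\mathbb{N}}$ partitions $\mathbb{T}$ up to a null set, this yields the desired $A$.  Recalling $\hat{g}^{\ell}_{j}(\xi)=\hat{h}_{j-1}(\xi)\hat{g}^{\ell}(2^{j-1}\xi)$ and that $\xi\in\mathbb{A}_{j}$ forces $|2^{j-1}\xi|\in (1/4,1/2]$, hypothesis (iii) immediately gives $\max_{\ell}|\hat{g}^{\ell}(2^{j-1}\xi)|\ge q_{0}$.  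The entire problem therefore reduces to producing a constant $c>0$, independent of $j$, such that $|\hat{h}_{j-1}(\xi)|\ge c$ on $\mathbb{A}_{j}$.

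I would factor $\hat{h}_{j-1}(\xi)=C(\xi)^{n}P(\xi)$ with $C(\xi)=\prod_{k=0}^{j-2}\frac{1+e^{2\pi i 2^{k}\xi}}{2}$ and $P(\xi)=\prod_{k=0}^{j-2}p(2^{k}\xi)$.  The cosine factor is handled by the standard sine-product identity $|C(\xi)|=\left|\frac{\sin(\pi 2^{j-1}\xi)}{2^{j-1}\sin(\pi\xi)}\right|$.  For $\xi\in\mathbb{A}_{j}$, the numerator satisfies $|\sin(\pi 2^{j-1}\xi)|\ge \sin(\pi/4)=\sqrt{2}/2$ because $\pi 2^{j-1}|\xi|\in (\pi/4,\pi/2]$, while the denominator satisfies $2^{j-1}|\sin(\pi\xi)|\le 2^{j-1}\pi|\xi|\le \pi/2$.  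This yields $|C(\xi)|\ge\sqrt{2}/\pi$ independently of $j$, hence $|C(\xi)|^{n}\ge (\sqrt{2}/\pi)^{n}$.

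The genuine obstacle is the uniform control of $P(\xi)$.  For $\xi\in\mathbb{A}_{j}$ one has $|2^{k}\xi|\le 1/4$ for every $k\le j-2$, so (i) alone only yields $|P(\xi)|\ge p_{0}^{j-1}$, which collapses to $0$ when $p_{0}<1$.  Condition (ii) is what makes the infinite product converge: I would partition $\{0,1,\ldots,j-2\}$ into the ``small'' set $K_{s}=\{k:2^{k}|\xi|<\delta\}$ and its complement $K_{\ell}$.  On $K_{s}$, (ii) gives $|p(2^{k}\xi)|\ge 1-a\cdot 2^{k}|\xi|$, and since $a\cdot 2^{k}|\xi|<a\delta\le 1/2$, I can apply $\ln(1-x)\ge -2x$ together with the geometric estimate $\sum_{k\in K_{s}}2^{k}|\xi|\le 2\delta$ to conclude $\prod_{k\in K_{s}}|p(2^{k}\xi)|\ge e^{-4a\delta}$.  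On $K_{\ell}$ the inequalities $\delta\le 2^{k}|\xi|\le 1/4$ confine $k$ to an interval of length at most $\log_{2}(1/(4\delta))$, so $|K_{\ell}|\le N$ for some $N$ depending only on $\delta$, and then (i) delivers $\prod_{k\in K_{\ell}}|p(2^{k}\xi)|\ge p_{0}^{N}$.

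Multiplying the three estimates produces $|\hat{h}_{j-1}(\xi)|\ge c:=(\sqrt{2}/\pi)^{n}p_{0}^{N}e^{-4a\delta}$ on $\mathbb{A}_{j}$, uniformly in $j$.  Combined with (iii), this gives $\sum_{\ell,j}|\hat{g}^{\ell}_{j}(\xi)|^{2}\ge c^{2}q_{0}^{2}$ a.e., and Proposition \ref{stable-char} delivers stability.  As indicated, the main technical hurdle is the lower bound for $P(\xi)$; the splitting into ``small'' and ``large'' indices is the key trick, with (ii) supplying the summability for the small tail and (i) controlling a bounded-size remainder.
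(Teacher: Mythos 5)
Your proposal is correct and follows essentially the same route as the paper's proof: reduce via Proposition \ref{stable-char} to a uniform lower bound on each annulus $\mathbb{A}_{j}$, bound the cosine product through the sine-quotient identity, control $\prod_k p(2^k\xi)$ by splitting the points $2^k\xi$ into those inside $[-\delta,\delta]$ (condition (ii)) and a bounded number outside (condition (i)), and finish with (iii) applied at $2^{j-1}\xi$. The only differences are cosmetic constants: the paper gets $2/\pi$ via a secant estimate and bounds the small-scale product by $p_\infty=\prod_{k\ge 0}(1-2^{-(k+1)})$, where you get $\sqrt{2}/\pi$ and $e^{-4a\delta}$ via the inequality $\ln(1-x)\ge -2x$.
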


\begin{proof}
The goal of the proof is to demonstrate a lower bound for the sum
$$ \sum_{\ell=1}^{L} \vert \hat{g}_{j}^{\ell}(\xi)\vert $$ 

\noindent
on $\mathbb{A}_{j}$ that is independent of $j$.  Recall that the Fourier transform of the iterated high-pass filter $g_{j}^{\ell}$ can be written as
$$ \hat{g}_{j}^{\ell}(\xi) = \hat{h}_{j}(\xi) \hat{g}^{\ell}(2^{j-1}\xi) = \left \lbrack \prod_{k=0}^{j-2} \left ( \frac{1+e^{2\pi i 2^{k} \xi}}{2} \right )^{n} p(2^{k}\xi) \right \rbrack \hat{g}^{\ell}(2^{j-1}\xi).$$

\noindent
It will be convenient to first estimate
$$ \left \vert \prod_{k=0}^{j-2} \frac{1+e^{2\pi i 2^{k} \xi}}{2} \right \vert = \frac{\sin{(2^{j-1}\pi \xi)}}{2^{j-1} \vert \sin{(\pi \xi)}\vert }$$

\noindent
on the annulus $\mathbb{A}_{j}$.  Notice that if $\xi \in \mathbb{A}_{j}$, then $2^{j-1}\vert \xi\vert \le \half$ and the secant approximation of $\sin{(\pi \xi)}$ on $\lbrack 0,\half\rbrack$ leads to the estimate $\vert \sin{(\pi 2^{j-1}\xi)}\vert \ge 2^{j} \vert \xi \vert$.  Combining this estimate with the fact that $\vert \sin {(\pi \xi)}\vert \le \pi \vert \xi \vert$ leads to
$$\left \vert \prod_{k=0}^{j-2} \frac{1+e^{2\pi i 2^{k} \xi}}{2} \right \vert \ge \frac{2^{j}\vert \xi \vert}{2^{j-1} \pi \vert \xi \vert} \ge \frac{2}{\pi}, \quad \xi \in \mathbb{A}_{j}.$$

\noindent
It follows that
\begin{equation} \label{est-1}
 \left \vert \prod_{k=0}^{j-2} \frac{1+e^{2\pi i 2^{k} \xi}}{2} \right \vert^{n} \ge  \frac{2^{n}}{\pi^{n}}, \quad \xi \in \mathbb{A}_{j}.
\end{equation}

\noindent
The next step is to derive a lower bound on the dyadic product of the terms $\vert p(2^{k}\xi)\vert$ for $\xi \in \mathbb{A}_{j}$.  If $\xi \in \lbrack -\delta, \delta \rbrack$, then since $\delta \le \frac{1}{2a}$ it follows that
$$ \prod_{k=0}^{j-2} \vert p(2^{-k} \xi) \vert \ge \prod_{k=0}^{j-2} (1-2^{-k}a\vert \xi \vert) \ge \prod_{k=0}^{j-2} (1-2^{-(k+1)}) \ge \prod_{k=0}^{\infty} (1-2^{-(k+1)}) =: p_{\infty}>0.$$

\noindent
Suppose now that $\xi \in \mathbb{A}_{j}$.  The points $2^{k}\xi$, $0\le k \le j-2$ all lie within $\lbrack -\quarter,\quarter \rbrack$ and no more than $K=\lfloor \log_{2}\delta^{-1} \rfloor$ of these points can lie outside the interval $\lbrack -\delta, \delta \rbrack$.  Therefore, it follows that
\begin{equation} \label{est-2}
\left ( \prod_{k=0}^{j-2} \vert p(2^{k} \xi)\vert \right ) \ge p_{\infty} p_{0}^{K} > 0.
\end{equation}

\noindent
Finally, observe that for $\xi \in \mathbb{A}_{j}$, it follows that $2^{j-1}\vert \xi\vert \in \lbrack \quarter, \half \rbrack$ which implies that
$$ \max_{1\le \ell \le L} \vert \hat{g}^{\ell}(2^{j-1}\xi) \vert \ge q_{0}.$$

\noindent
Bringing this observation together with the estimates \eqref{est-1} and \eqref{est-2}, it follows that
$$\max_{1\le \ell \le L} \vert \hat{g}_{j}^{\ell}(\xi)\vert \ge \frac{2^{n} p_{\infty} p_{0}^{K} q_{0}}{\pi^{n}} > 0$$

\noindent
for all $\xi \in \mathbb{A}_{j}$.  In light of Proposition \ref{stable-char}, the infinitely iterated shift-invariant filter bank associated with $h, g^{1}, g^{2}, \ldots, g^{L}$ is stable.
\end{proof}

\section{Stability of Finitely Iterated Shift-Invariant Filter Banks} \label{FIFBstability}

This section focuses on the correspondence of stability between infinitely iterated shift-invariant filter banks and their finitely iterated counterparts.  The analysis schematic for the \emph{finitely iterated dyadic filter bank of order $J$} associated with low-pass filter $h$ and a high-pass filters $g^{1}, g^{2}, \ldots, g^{L}$ is depicted in Figure \ref{SI-FIFB-analysis}.  The \emph{filter bank analysis operator of order $J$} associated with the filters $h, g^{1}, g^{2}, \ldots, g^{L} \in \ell^{2}(\mathbb{Z})$ and acting on $x\in \ell^{2}(\mathbb{Z})$ is the mapping 
$$F_{J}: x \mapsto F_{J} x := \lbrace (F_{J} x)_{j,\ell} \rbrace_{1\le \ell \le L, 1\le j \le J} \cup \lbrace (F_{J}x)_{J+1} \rbrace,$$

\noindent
where $(F_{J} x)_{j,\ell} = x*\overline{g}^{\ell}_{j}$ for $1\le j \le J$ and $(F_{J} x)_{J+1} = x*\overline{h}_{J}$.  Notice that $(F_{J} x)_{j,\ell} = (Fx)_{j,\ell}$ for $1\le j \le J$, while $(F_{J} x)_{J+1}$ accounts for the contribution of the iterated low-pass filter of order $J$.  

\begin{definition} \label{finite-FB-frame}
The finitely iterated shift-invariant filter bank is said to be \emph{stable} when there exist constants $0<A\le B <\infty$ such that for all $x\in \ell^{2}(\mathbb{Z})$,
$$ A\Vert x\Vert^{2} \le \Vert (F_{J} x)_{J+1} \Vert^{2} + \sum_{\ell=1}^{L} \sum_{j=1}^{J} \Vert (F_{J} x)_{j,\ell} \Vert^{2} \le B \Vert x \Vert^{2}.$$
\end{definition}

\noindent
The terms \emph{Bessel} and \emph{Parseval} will be applied to finitely iterated shift-invariant filter banks in the same manner that they are used in the infinitely iterated context.

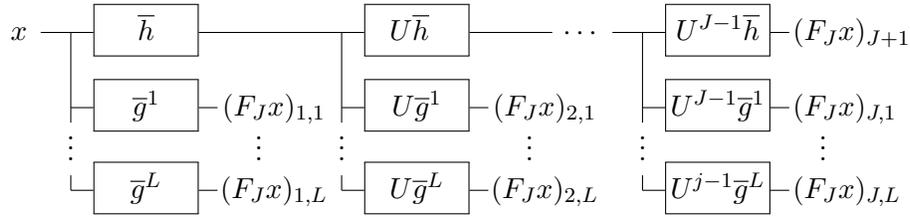
\begin{figure}[hbtp]
\centering
\begin{picture}(120,27)(1,8)
	\put (1,30){$x$}
	\put (5,31){\line(1,0){7}}
	\put (9,31){\line(0,-1){12}}
	\put (12,29){\exactframe{14mm}{7mm}}
    \put (12,30){\makebox[14mm]{$\overline{h}$}}
	\put (26,31){\line(1,0){22}}
	\put (45,31){\line(0,-1){12}}
	\put (48,29){\exactframe{14mm}{7mm}}
    \put (47,30){\makebox[14mm]{$U\overline{h}$}}
	\put (62,31){\line(1,0){11}}
    \put (74,30){\makebox[6mm]{$\cdots$}}
	\put (81,31){\line(1,0){7}}
	\put (85,31){\line(0,-1){12}}
	\put (88,29){\exactframe{14mm}{7mm}}
    \put (88,30){\makebox[14mm]{$U^{J-1}\overline{h}$}}
	\put (102,31){\line(1,0){2.5}}
	\put (105,30){$(F_{J} x)_{J+1}$}

	\put (9,21){\line(1,0){3}}
	\put (12,19){\exactframe{14mm}{7mm}}
    \put (12,20){\makebox[14mm]{$\overline{g}^{1}$}}
	\put (26,21){\line(1,0){2.5}}
	\put (32,20){\makebox[8mm]{$(F_{J} x)_{1,1}$}}
	\put (45,21){\line(1,0){3}}
	\put (48,19){\exactframe{14mm}{7mm}}
    \put (48,20){\makebox[14mm]{$ U\overline{g}^{1}$}}
	\put (62,21){\line(1,0){2.5}}
	\put (68,20){\makebox[8mm]{$(F_{J} x)_{2,1}$}}
	\put (85,21){\line(1,0){3}}
	\put (88,19){\exactframe{14mm}{7mm}}
    \put (88,20){\makebox[14mm]{$U^{J-1}\overline{g}^{1}$}}
	\put (102,21){\line(1,0){2.5}}
	\put (105,20){$(F_{J} x)_{J,1}$}

    \put (8,14){\makebox[2mm]{$\vdots$}}
    \put (33,14){\makebox[2mm]{$\vdots$}}
    \put (44,14){\makebox[2mm]{$\vdots$}}
    \put (69,14){\makebox[2mm]{$\vdots$}}
    \put (84,14){\makebox[2mm]{$\vdots$}}
    \put (108,14){\makebox[2mm]{$\vdots$}}

	\put (9,10){\line(0,1){2}}
	\put (9,10){\line(1,0){3}}
	\put (12,8){\exactframe{14mm}{7mm}}
    \put (12,9){\makebox[14mm]{$\overline{g}^{L}$}}
	\put (26,10){\line(1,0){2.5}}
	\put (32,9){\makebox[8mm]{$(F_{J} x)_{1,L}$}}
	\put (45,10){\line(0,1){2}}
	\put (45,10){\line(1,0){3}}
	\put (48,8){\exactframe{14mm}{7mm}}
    \put (48,9){\makebox[14mm]{$ U\overline{g}^{L}$}}
	\put (62,10){\line(1,0){2.5}}
	\put (68,9){\makebox[8mm]{$(F_{J} x)_{2,L}$}}
	\put (85,10){\line(0,1){2}}
	\put (85,10){\line(1,0){3}}
	\put (88,8){\exactframe{14mm}{7mm}}
    \put (88,9){\makebox[14mm]{$U^{j-1}\overline{g}^{L}$}}
	\put (102,10){\line(1,0){2.5}}
	\put (105,9){$(F_{J} x)_{J,L}$}

\end{picture}

\caption{Analysis schematic for a finitely iterated shift-invariant filter bank.} \label{SI-FIFB-analysis}
\end{figure}

\begin{theorem} \label{infinite2finite}
Let $h, g^{1}, g^{2}, \ldots, g^{L} \in \ell^{2}(\mathbb{Z})$ be low- and high-pass filters.  If the infinitely iterated shift-invariant filter bank associated with $h, g^{1}, g^{2}, \ldots, g^{L}$ is stable with bounds $A$ and $B$, then the finitely iterated shift-invariant filter bank of order $J$ is stable for each $J\in \mathbb{N}$ with bounds $\min{\lbrace A, A/B\rbrace}$ and $\max{\lbrace B/A, B\rbrace}$.
\end{theorem}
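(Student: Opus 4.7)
The plan is to reduce stability of the finitely iterated filter bank to a pointwise inequality on $\mathbb{T}$, exactly as in the proof of Proposition~\ref{stable-char}, and then to control that pointwise sum using both the given infinite stability and the shifted version of the infinite filter bank guaranteed by Remark~\ref{upsampledF}.

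First, Fubini--Tonelli gives
$$\|F_{J} x\|^{2} = \int_{\mathbb{T}} |\hat{x}(\xi)|^{2} Q(\xi) \, d\xi, \qquad Q(\xi) := |\hat{h}_{J}(\xi)|^{2} + \sum_{\ell=1}^{L}\sum_{j=1}^{J}|\hat{g}_{j}^{\ell}(\xi)|^{2},$$
so, arguing exactly as in Proposition~\ref{stable-char}, the finitely iterated filter bank of order $J$ is stable with constants $A' \le B'$ if and only if $A' \le Q(\xi) \le B'$ for a.e.\ $\xi \in \mathbb{T}$. The task reduces to bounding $Q(\xi)$ pointwise.

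The essential identity is that for $j > J$, the factorization \eqref{beta-j} together with $\hat{h}_{j-1}(\xi) = \hat{h}_{J}(\xi) \prod_{k=J}^{j-2} \hat{h}(2^{k}\xi)$ yields
$$\hat{g}_{j}^{\ell}(\xi) = \hat{h}_{J}(\xi)\, \hat{\tilde{g}}_{j-J}^{\ell}(\xi), \qquad \tilde{h} := U^{J}h, \quad \tilde{g}^{\ell} := U^{J}g^{\ell}.$$
Remark~\ref{upsampledF} asserts that the infinitely iterated filter bank generated by $\tilde{h}, \tilde{g}^{1}, \ldots, \tilde{g}^{L}$ is stable with the same bounds $A, B$, so by Proposition~\ref{stable-char}, $S(\xi) := \sum_{\ell}\sum_{m=1}^{\infty}|\hat{\tilde{g}}_{m}^{\ell}(\xi)|^{2} \in [A,B]$ a.e. Summing the tail gives
$$\sum_{\ell=1}^{L}\sum_{j=J+1}^{\infty}|\hat{g}_{j}^{\ell}(\xi)|^{2} = |\hat{h}_{J}(\xi)|^{2} S(\xi),$$
which, combined with $T(\xi) := \sum_{\ell}\sum_{j=1}^{\infty}|\hat{g}_{j}^{\ell}(\xi)|^{2} \in [A,B]$ a.e., produces
$$Q(\xi) = T(\xi) + |\hat{h}_{J}(\xi)|^{2}\bigl(1 - S(\xi)\bigr), \qquad \text{with the constraint} \quad |\hat{h}_{J}(\xi)|^{2}S(\xi) \le T(\xi).$$

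The remaining step is a short case analysis on the sign of $1 - S(\xi)$. When $S(\xi) \le 1$, the added term is nonnegative, so $Q(\xi) \ge T(\xi) \ge A$ and $Q(\xi) \le T(\xi) + |\hat{h}_{J}(\xi)|^{2}(1-S(\xi)) \le T(\xi)/S(\xi) \le B/A$ (using $|\hat{h}_{J}|^{2} \le T/S$). When $S(\xi) > 1$, one rewrites $Q(\xi) = T(\xi) - |\hat{h}_{J}(\xi)|^{2}(S(\xi)-1)$ and again applies $|\hat{h}_{J}|^{2} \le T/S$ to conclude $Q(\xi) \ge T(\xi)/S(\xi) \ge A/B$ and $Q(\xi) \le T(\xi) \le B$. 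Altogether $\min\{A, A/B\} \le Q(\xi) \le \max\{B, B/A\}$ a.e., which is the desired conclusion. The step I expect to be most delicate is recognizing the right factorization $\hat{g}_{j}^{\ell} = \hat{h}_{J}\,\hat{\tilde{g}}_{j-J}^{\ell}$ for the tail, and then spotting the key constraint $|\hat{h}_{J}|^{2}S \le T$ that powers the case analysis; once these structural observations are in hand, both bounds follow with no computation.
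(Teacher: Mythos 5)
Your proof is correct, and it shares the paper's key structural idea while finishing by a genuinely different argument. The common core is the observation that after $J$ levels the remaining branches of the infinite filter bank are exactly the $J$-upsampled bank of Remark \ref{upsampledF} applied after the iterated low-pass filter, i.e. $\hat{g}^{\ell}_{j}=\hat{h}_{J}\,\hat{\tilde{g}}^{\ell}_{j-J}$ for $j>J$ (your factorization checks out against \eqref{alpha-j} and \eqref{beta-j}). The paper exploits this at the level of norms: it writes $\Vert Fx\Vert^{2}=\sum_{\ell}\sum_{j=1}^{J}\Vert x*\overline{g}^{\ell}_{j}\Vert^{2}+\Vert F^{[J]}(x*\overline{h}_{J})\Vert^{2}$ and then runs a three-case analysis according to where $1$ sits relative to $[A,B]$, rescaling the high-pass filters by $1/\sqrt{A}$ or $1/\sqrt{B}$ in two of the cases to reduce to the case $A\le 1\le B$. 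You instead pass to the Fourier-side characterizations (Propositions \ref{stable-char} and \ref{stable-finite}), derive the exact identity $Q=T+\vert\hat{h}_{J}\vert^{2}(1-S)$ together with the constraint $\vert\hat{h}_{J}\vert^{2}S\le T$, and split pointwise on whether $S(\xi)\le 1$ or $S(\xi)>1$; the elementary estimates $Q\le T/S$ and $Q\ge T/S$ in the respective cases are valid since $S\ge A>0$ a.e. Both routes give the stated bounds $\min\{A,A/B\}$ and $\max\{B,B/A\}$. Your pointwise version is arguably cleaner: it avoids the rescaling device and the global trichotomy, and it records slightly finer local information ($Q\in[A,B/A]$ where $S\le 1$, $Q\in[A/B,B]$ where $S>1$). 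The paper's norm-level argument, in exchange, never needs the explicit multiplier identity for the tail and reads the bounds directly off the frame inequalities for $F$ and $F^{[J]}$, which keeps it closer to the operator-theoretic formulation used elsewhere in Section \ref{FIFBstability}.
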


\begin{proof}
Throughout the proof, the quantities $x*\overline{h}_{j}$ and $x*\overline{g}^{\ell}_{j}$, $j\in \mathbb{Z}$, will be denoted by $x_{j}$ and $y^{\ell}_{j}$, respectively.  Fix $x\in \ell^{2}(\mathbb{Z})$.  The infinitely iterated shift-invariant filter bank associated with $h$ and $g$ is stable with bounds $A$ and $B$, which implies that
$$ A \Vert x\Vert^{2} \le \sum_{\ell=1}^{L} \sum_{j=1}^{\infty} \Vert y^{\ell}_{j}\Vert^{2} \le B \Vert x\Vert^{2}.$$
 
\noindent
The goal is to establish lower and upper bounds on $\Vert x_{J}\Vert^{2} + \sum_{\ell=1}^{L} \sum_{j=1}^{J} \Vert y_{j}^{\ell}\Vert^{2}$ that are independent of $J$.  The argument relies on the fact that $y_{j}^{\ell}$, $j \ge J$, can be computed from $x_{J}$ using the infinitely iterated filter bank associated with the filters $U^{J}h$ and $U^{J}g^{\ell}$, as shown in Figure \ref{proof-figure}.  Remark \ref{upsampledF} establishes the fact that the upsampled filter bank is also stable with bounds $A$ and $B$ for any $J\in \mathbb{N}$.  It will be convenient to denote the analysis operator for the $J$-upsampled filter bank by $F^{[J]}$.  Three cases will be considered.

\begin{figure}[hbtp]
\centering
\begin{picture}(120,28)(1,6)
	\put (0,30){$x_{J}$}
	\put (5,31){\line(1,0){7}}

	\put (9,31){\line(0,-1){13}}
	\put (8,13.5){\makebox[2mm]{$\vdots$}}
    \put (12,29){\exactframe{14mm}{7mm}}
    \put (12,30){\makebox[14mm]{$U^{J}\overline{h}$}}
	\put (26,31){\line(1,0){11}}
	\put (37,31){\line(1,0){8}}

	\put (42,31){\line(0,-1){13}}
	\put (41,13.5){\makebox[2mm]{$\vdots$}}
    \put (45,29){\exactframe{14mm}{7mm}}
    \put (45,30){\makebox[14mm]{$U^{J+1}\overline{h}$}}
	\put (59,31){\line(1,0){19}}
	
    \put (9,13){\line(0,-1){5}}
    \put (9,21){\line(1,0){3}}
    \put (12,19){\exactframe{14mm}{7mm}}
    \put (12,20){\makebox[14mm]{$U^{J}\overline{g}^{1}$}}
	\put (26,21){\line(1,0){4}}
	\put (30.5,20){\makebox[8mm]{$y_{J+1}^{1}$}}
	\put (33.5,13.5){\makebox[2mm]{$\vdots$}}
	
	\put (42,13){\line(0,-1){5}}
	\put (42,21){\line(1,0){3}}
    \put (45,19){\exactframe{14mm}{7mm}}
    \put (45,20){\makebox[14mm]{$ U^{J+1}\overline{g}^{1}$}}
	\put (59,21){\line(1,0){4}}
	\put (63.5,20){\makebox[8mm]{$y_{J+2}^{1}$}}
	\put (66.5,13.5){\makebox[2mm]{$\vdots$}}

	\put (9,8){\line(1,0){3}}
    \put (12,6){\exactframe{14mm}{7mm}}
    \put (12,7){\makebox[14mm]{$U^{J}\overline{g}^{L}$}}
	\put (26,8){\line(1,0){4}}
	\put (30.5,7){\makebox[8mm]{$y_{J+1}^{L}$}}
	
	\put (42,8){\line(1,0){3}}
    \put (45,6){\exactframe{14mm}{7mm}}
    \put (45,7){\makebox[14mm]{$U^{J+1}\overline{g}^{L}$}}
	\put (59,8){\line(1,0){4}}
	\put (63.5,7){\makebox[8mm]{$y_{J+2}^{L}$}}

	\put (75,31){\line(0,-1){13}}
	\put (74,13.5){\makebox[2mm]{$\vdots$}}
    \put (78,29){\exactframe{14mm}{7mm}}
    \put (78,30){\makebox[14mm]{$U^{J+2}\overline{h}$}}
    \put (92,31){\line(1,0){12}}
	\put (105,30){\makebox[6mm]{$\cdots$}}

    \put (75,13){\line(0,-1){5}}
    \put (75,21){\line(1,0){3}}
    \put (78,19){\exactframe{14mm}{7mm}}
    \put (78,20){\makebox[14mm]{$U^{J+2}\overline{g}^{1}$}}
	\put (92,21){\line(1,0){4}}
	\put (97.5,20){\makebox[8mm]{$y_{J+3}^{1}$}}
	\put (100.5,13.5){\makebox[2mm]{$\vdots$}}

	\put (75,8){\line(1,0){3}}
    \put (78,6){\exactframe{14mm}{7mm}}
    \put (78,7){\makebox[14mm]{$U^{J+2}\overline{g}^{L}$}}
	\put (92,8){\line(1,0){4}}
	\put (97.5,7){\makebox[8mm]{$y_{J+3}^{L}$}}
\end{picture}

\caption{Analysis schematic for a $J$-upsampled infinitely iterated shift-invariant filter bank.} \label{proof-figure}
\end{figure}
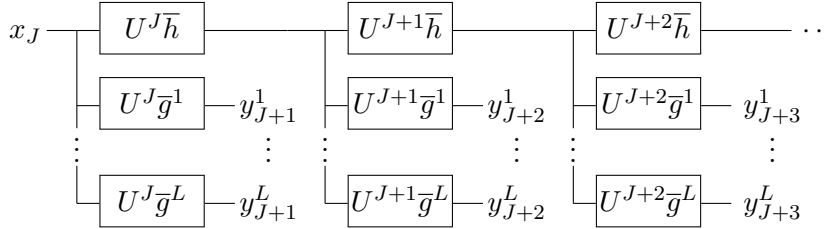

\medskip \noindent
\underline{Case 1:} Suppose that $A\le 1 \le B$.  The above observations lead to the identities
$$\Vert Fx\Vert^{2} = \sum_{\ell=1}^{L} \sum_{j\in \mathbb{N}} \Vert y_{j}^{\ell}\Vert^{2} = \sum_{\ell=1}^{L} \sum_{j=1}^{J} \Vert y_{j}^{\ell}\Vert^{2} + \Vert F^{[J]} x_{J} \Vert^{2}$$

\noindent
as well as the inequalities $A\Vert x\Vert^{2} \le \Vert Fx\Vert^{2} \le B\Vert x\Vert^{2}$ and $A \Vert x_{J} \Vert^{2} \le \Vert F^{[J]} x_{J}\Vert^{2} \le B \Vert x_{J}\Vert^{2}$.  It follows that
$$ \Vert x_{J}\Vert^{2} + \sum_{\ell=1}^{L} \sum_{j=1}^{J} \Vert y_{j}^{\ell}\Vert^{2} \le \frac{1}{A} \Vert F^{[J]} x_{J}\Vert^{2} + \sum_{\ell=1}^{L} \sum_{j=1}^{J} \Vert y_{j}^{\ell} \Vert^{2} \le \frac{1}{A} \sum_{j \in \mathbb{N}} \Vert y_{j}^{\ell}\Vert^{2} \le \frac{B}{A} \Vert x\Vert^{2}.$$

\noindent
Similarly, observe that
\begin{align*}
\Vert x_{J}\Vert^{2} + \sum_{\ell=1}^{L} \sum_{j=1}^{J} \Vert y_{j}^{\ell}\Vert^{2} &\ge \frac{1}{B} \Vert F^{[J]} x_{J}\Vert^{2} + \sum_{\ell=1}^{L} \sum_{j=1}^{J} \Vert y_{j}^{\ell} \Vert^{2} \\
&= \frac{1}{B} \left ( \Vert F x\Vert^{2} - \sum_{\ell=1}^{L} \sum_{j=1}^{J} \Vert y_{j}^{\ell} \Vert^{2} \right ) + \sum_{\ell=1}^{L} \sum_{j=1}^{J} \Vert y_{j}^{\ell} \Vert^{2} \\
&\ge \frac{A}{B} \Vert x\Vert^{2}.
\end{align*}

\noindent
Therefore, the finitely iterated shift-invariant filter bank associated with $h$ and $g$ is stable with bounds $A/B$ and $B/A$.

\medskip \noindent
\underline{Case 2:} Suppose that $1\le A\le B$.  Define $\tilde{g}^{\ell}=g^{\ell}/\sqrt{A}$ and let $\tilde{F}$ represent the analysis operator of the infinitely iterated shift-invariant filter bank associated with $h$ and $\tilde{g}^{\ell}$, $1\le \ell \le L$.  Similarly, denote $x*\overline{\tilde{g}}_{j}^{\ell}$ by $\tilde{y}_{j}^{\ell}$.  By construction, the infinitely iterated shift-invariant filter bank associated with $h$ and $\tilde{g}^{\ell}$, $1\le \ell \le L$, is stable with bounds $1$ and $B/A$.  Observe that
$$ \Vert x_{J}\Vert^{2} + \sum_{\ell=1}^{L} \sum_{j=1}^{J} \Vert y_{j}^{\ell}\Vert^{2} \le A \left ( \Vert x_{J}\Vert^{2} + \sum_{\ell=1}^{L} \sum_{j=1}^{J} \Vert \tilde{y}_{j}^{\ell}\Vert^{2} \right ) \le B \Vert x\Vert^{2}$$

\noindent
using the upper bound provided by Case 1.  Similarly, the lower bound for Case 1 leads to
$$ \Vert x_{J}\Vert^{2} + \sum_{\ell=1}^{L} \sum_{j=1}^{J} \Vert y_{j}^{\ell}\Vert^{2} \ge  \Vert x_{J}\Vert^{2} + \sum_{\ell=1}^{L} \sum_{j=1}^{J} \Vert \tilde{y}_{j}^{\ell}\Vert^{2} \ge \frac{A}{B} \Vert x\Vert^{2},$$

\noindent
showing that the finitely iterated shift-invariant filter bank associated with $h$ and $g$ is stable with bounds $A/B$ and $B$.

\medskip \noindent
\underline{Case 3:} Suppose that $A\le B\le 1$.  Define $\tilde{g}^{\ell}=g^{\ell}/\sqrt{B}$ and let $\tilde{F}$ now represent the analysis operator of the infinitely iterated shift-invariant filter bank associated with $h$ and $\tilde{g}^{\ell}$, $1\le \ell \le L$.  As before, denote $x*\overline{\tilde{g}}_{j}^{\ell}$ by $\tilde{y}_{j}^{\ell}$.  By construction, the infinitely iterated shift-invariant filter bank associated with $h$ and $\tilde{g}^{\ell}$, $1\le \ell \le L$, is stable with bounds $A/B$ and $1$.  Observe that
$$ \Vert x_{J}\Vert^{2} + \sum_{\ell=1}^{L} \sum_{j=1}^{J} \Vert y_{j}^{\ell}\Vert^{2} \le \Vert x_{J}\Vert^{2} + \sum_{\ell=1}^{L} \sum_{j=1}^{J} \Vert \tilde{y}_{j}^{\ell}\Vert^{2} \le \frac{B}{A} \Vert x\Vert^{2}$$

\noindent
by Case 1.  Similarly, 
$$ \Vert x_{J}\Vert^{2} + \sum_{\ell=1}^{L} \sum_{j=1}^{J} \Vert y_{j}^{\ell}\Vert^{2} \ge  B \left ( \Vert x_{J}\Vert^{2} + \sum_{\ell=1}^{L} \sum_{j=1}^{J} \Vert \tilde{y}_{j}^{\ell}\Vert^{2} \right ) \ge  A \Vert x\Vert^{2},$$

\noindent
showing that the finitely iterated shift-invariant filter bank associated with $h$ and $g$ is stable with bounds $A$ and $B/A$.

\medskip
The claimed stability bounds for the finitely iterated shift-invariant filter bank incorporate the findings of the three separate cases.
\end{proof}

It is natural to consider whether the converse to Theorem \ref{infinite2finite} holds, which would guarantee the stability of an infinitely iterated shift-invariant filter bank under the assumption that the finitely iterated counterparts are stable with uniform bounds.  The next proposition identifies a necessary condition on the low-pass filter for this to occur.

\begin{proposition} \label{stableZERO}
Let $h, g^{1}, g^{2}, \ldots g^{L} \in \ell^{2}(\mathbb{Z})$.  If the infinitely iterated shift-invariant filter bank associated with $h, g^{1}, g^{2}, \ldots, g^{L}$ is stable with bounds $A$ and $B$, then, for each $x\in \ell^{2}(\mathbb{Z})$, $\Vert x * \overline{h}_{J}\Vert \rightarrow 0$ as $J\rightarrow \infty$.
\end{proposition}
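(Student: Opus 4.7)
The plan is to exploit the decomposition used in the proof of Theorem \ref{infinite2finite}, combined with the observation that the tails of a convergent series vanish. Fix $x \in \ell^{2}(\mathbb{Z})$ and, following the earlier notation, write $x_{J} = x * \overline{h}_{J}$ and $y^{\ell}_{j} = x * \overline{g}^{\ell}_{j}$. I would first record the key factorization, already implicit in Theorem \ref{infinite2finite}: for $j > J$,
$$ g^{\ell}_{j} = h_{J} * \widetilde{g}^{\ell}_{j-J}, $$
where $\widetilde{g}^{\ell}_{m}$ denotes the iterated high-pass filter of order $m$ associated with the $J$-upsampled system $\widetilde{h} = U^{J}h$ and $\widetilde{g}^{\ell} = U^{J}g^{\ell}$. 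This identity follows by direct comparison of the defining convolution products in \eqref{alpha-j} and \eqref{beta-j}.

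Convolving with $\overline{h}_{J}$ before processing with the upsampled filter bank then yields $y^{\ell}_{j} = (F^{[J]} x_{J})_{j-J,\ell}$ for $j > J$, where $F^{[J]}$ is the analysis operator of the $J$-upsampled infinitely iterated filter bank, as in the proof of Theorem \ref{infinite2finite}. Summing squared norms delivers the crucial decomposition
$$ \Vert Fx \Vert^{2} = \sum_{\ell=1}^{L} \sum_{j=1}^{J} \Vert y^{\ell}_{j}\Vert^{2} + \Vert F^{[J]} x_{J}\Vert^{2}. $$

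To conclude, I would let $J \to \infty$. Stability of the original filter bank ensures $\Vert Fx\Vert^{2} \le B\Vert x\Vert^{2} < \infty$, so the partial sums on the right converge to $\Vert Fx\Vert^{2}$, forcing $\Vert F^{[J]} x_{J}\Vert \to 0$. By Remark \ref{upsampledF}, the $J$-upsampled filter bank is itself stable with the same bounds $A$ and $B$, giving $A \Vert x_{J}\Vert^{2} \le \Vert F^{[J]} x_{J}\Vert^{2}$, and therefore $\Vert x_{J}\Vert \to 0$ as required. No serious obstacle is anticipated; the only minor bookkeeping is verifying the factorization of $g^{\ell}_{j}$ through $h_{J}$, after which the statement is a direct consequence of absolute convergence of $\sum_{\ell,j} \Vert y^{\ell}_{j}\Vert^{2}$ together with the uniform lower bound provided by the upsampled stability.
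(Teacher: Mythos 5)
Your proposal is correct and follows essentially the same route as the paper: the decomposition $\Vert Fx\Vert^{2} = \sum_{\ell,j\le J}\Vert y_{j}^{\ell}\Vert^{2} + \Vert F^{[J]}x_{J}\Vert^{2}$ from the proof of Theorem \ref{infinite2finite}, the lower bound $A\Vert x_{J}\Vert^{2}\le \Vert F^{[J]}x_{J}\Vert^{2}$ from Remark \ref{upsampledF}, and the vanishing tail of the convergent series $\sum_{\ell,j}\Vert y_{j}^{\ell}\Vert^{2}\le B\Vert x\Vert^{2}$. Your explicit verification of the factorization $g^{\ell}_{j}=h_{J}*\widetilde{g}^{\ell}_{j-J}$ is a detail the paper leaves implicit, but the argument is the same.
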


\begin{proof}
Let $x\in \ell^{2}(\mathbb{Z})$.  Let $x_{j}$, $y_{j}^{\ell}$, and $F^{[J]}$ be defined as in the proof of Theorem \ref{infinite2finite}.  It suffices to prove that $\Vert x_{j}\Vert \rightarrow 0$ as $j\rightarrow \infty$.  The stability of the infinitely iterated shift-invariant filter bank implies that $A\Vert x\Vert^{2} \le \Vert Fx\Vert^{2} \le B \Vert x\Vert^{2}$ and, by Remark \ref{upsampledF}, it follows that
$$ A\Vert x_{J}\Vert \le \Vert F^{[J]} x_{J} \Vert^{2} \le B\Vert x_{J}\Vert^{2}.$$

\noindent
Therefore, since $\Vert F x\Vert^{2} = \Vert F^{[J]} x_{J}\Vert^{2} + \sum_{\ell=1}^{L} \sum_{j=1}^{J} \Vert y_{j}^{\ell}\Vert^{2}$, it follows that
$$ \Vert x_{J} \Vert^{2} \le \frac{1}{A} \Vert F^{[J]} x_{J}\Vert^{2} = \frac{1}{A} \left ( \Vert Fx\Vert^{2} - \sum_{\ell=1}^{L} \sum_{j=1}^{J} \Vert y_{j}^{\ell}\Vert^{2} \right ) = \frac{1}{A} \sum_{j\ge J+1} \Vert y_{j}^{\ell}\Vert^{2},$$

\noindent
which tends to zero as $J\rightarrow \infty$ since $\sum_{\ell = 1}^{L} \sum_{j \in \mathbb{N}} \Vert y_{j}^{\ell}\Vert^{2} \le B\Vert x\Vert^{2}$.
\end{proof}

The following result describes a partial converse to Theorem \ref{infinite2finite} under the additional assumption on the low-pass filter identified by Proposition \ref{stableZERO}.  It is unclear to the authors whether or not this extra assumption is strictly necessary.

\begin{proposition} \label{finite2infinite}
Let $h, g^{1}, g^{2}, \ldots g^{L} \in \ell^{2}(\mathbb{Z})$.  If the finitely iterated shift-invariant filter bank of order $J$ associated with $h, g^{1}, g^{2}, \ldots, g^{L}$ is stable for each $J\in \mathbb{N}$ with bounds $A$ and $B$ and, for each $x\in \ell^{2}(\mathbb{Z})$, $\Vert x * \overline{h}_{J}\Vert \rightarrow 0$ as $J\rightarrow \infty$, then the infinitely iterated shift-invariant filter bank associated with $h, g^{1}, g^{2}, \ldots, g^{L}$ is stable with bounds $A$ and $B$.
\end{proposition}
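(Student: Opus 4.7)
The plan is to pass to the limit $J\to\infty$ in the defining stability inequality for the finitely iterated filter bank of order $J$, exploiting the critical observation that for $1\le j\le J$ the components $(F_{J}x)_{j,\ell}$ of the finitely iterated analysis operator agree \emph{identically} with the components $(Fx)_{j,\ell}$ of the infinitely iterated analysis operator. This means the partial sums $\sum_{\ell=1}^{L}\sum_{j=1}^{J}\Vert (Fx)_{j,\ell}\Vert^{2}$ appear directly in the finitely iterated stability estimate, and the only term that needs to disappear in the limit is the low-pass residual $\Vert (F_{J}x)_{J+1}\Vert^{2}=\Vert x\ast \overline{h}_{J}\Vert^{2}$, which vanishes by the extra hypothesis.

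Fix $x\in \ell^{2}(\mathbb{Z})$. By the stability of the order-$J$ finitely iterated filter bank together with the identity $(F_{J}x)_{j,\ell}=(Fx)_{j,\ell}$ for $1\le j\le J$, we have
$$ A\Vert x\Vert^{2} \;\le\; \Vert x\ast \overline{h}_{J}\Vert^{2} + \sum_{\ell=1}^{L}\sum_{j=1}^{J}\Vert (Fx)_{j,\ell}\Vert^{2} \;\le\; B\Vert x\Vert^{2}, \qquad J\in\mathbb{N}. $$
First I would deduce the upper bound: since $\Vert x\ast \overline{h}_{J}\Vert^{2}\ge 0$, the partial sums $\sum_{\ell=1}^{L}\sum_{j=1}^{J}\Vert (Fx)_{j,\ell}\Vert^{2}$ are bounded above by $B\Vert x\Vert^{2}$ uniformly in $J$, and being monotone in $J$ they converge; letting $J\to\infty$ yields $\sum_{\ell=1}^{L}\sum_{j=1}^{\infty}\Vert (Fx)_{j,\ell}\Vert^{2}\le B\Vert x\Vert^{2}$.

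Next I would deduce the lower bound by rearranging the left-hand inequality as
$$ A\Vert x\Vert^{2} - \Vert x\ast \overline{h}_{J}\Vert^{2} \;\le\; \sum_{\ell=1}^{L}\sum_{j=1}^{J}\Vert (Fx)_{j,\ell}\Vert^{2} \;\le\; \sum_{\ell=1}^{L}\sum_{j=1}^{\infty}\Vert (Fx)_{j,\ell}\Vert^{2}, $$
and then sending $J\to\infty$. By the hypothesis $\Vert x\ast \overline{h}_{J}\Vert\to 0$, the left-hand side tends to $A\Vert x\Vert^{2}$, producing the desired lower frame bound. Invoking Proposition \ref{stable-char} (or, equivalently, the characterization \eqref{frame-def} of stability), this establishes that the infinitely iterated shift-invariant filter bank is stable with bounds $A$ and $B$.

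There is essentially no obstacle in this argument; the entire content of the proposition is already packaged in the identification $(F_{J}x)_{j,\ell}=(Fx)_{j,\ell}$ for $j\le J$ together with the vanishing of the low-pass residual. The only point that requires a moment's care is verifying that the uniformity of the bounds $A$ and $B$ over all $J$ is exactly what permits the limit to preserve both inequalities simultaneously; without uniformity one could not conclude stability of the infinite filter bank.
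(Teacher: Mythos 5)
Your proof is correct and follows essentially the same route as the paper's own argument: both obtain the upper bound by dropping the nonnegative low-pass residual and letting the monotone partial sums converge, and both obtain the lower bound by rearranging the order-$J$ inequality and using the hypothesis $\Vert x\ast\overline{h}_{J}\Vert\rightarrow 0$ as $J\rightarrow\infty$. There are no gaps.
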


\begin{proof}
Let $x\in \ell^{2}(\mathbb{Z})$.  Let $x_{j}$ and $y_{j}^{\ell}$ be defined as in the proof of Theorem \ref{infinite2finite}.  By hypothesis, 
$$ A\Vert x\Vert^{2} \le \Vert x_{J} \Vert^{2} + \sum_{\ell=1}^{L} \sum_{j=1}^{J} \Vert y_{j}^{\ell} \Vert^{2} \le B \Vert x\Vert^{2}$$

\noindent
holds for each $J\in \mathbb{N}$.  It follows immediately that
$$ \sum_{\ell=1}^{L} \sum_{j \in \mathbb{N}} \Vert y_{j}^{\ell}\Vert^{2} \le B \Vert x\Vert^{2}.$$

\noindent
Moreover, for each $J\in \mathbb{N}$, one has
$$ \sum_{\ell=1}^{L} \sum_{j=1}^{J} \Vert y_{j}^{\ell}\Vert^{2} \ge A \Vert x\Vert^{2} - \Vert x_{J}\Vert^{2}.$$

\noindent
Since $\Vert x_{J}\Vert\rightarrow 0$ as $J\rightarrow \infty$, it follows that
$$ \sum_{\ell=1}^{L} \sum_{j \in \mathbb{N}} \Vert y_{j}^{\ell}\Vert^{2} \ge A \Vert x\Vert^{2}.$$
\end{proof}

It will be useful in later sections to establish a perfect reconstruction criterion for filters associated with a shift-invariant filter bank, which guarantees that any finitely iterated shift-invariant filter bank will be Parseval.

\begin{proposition} \label{PerfectRecon}
Let $h, g^{1}, g^{2}, \ldots, g^{L} \in \ell^{2}(\mathbb{Z})$.  Assume that $h$ is a low-pass filter and that $g^{1}, g^{2}, \ldots, g^{L}$ are high-pass filters.  If
\begin{equation} \label{PReq}
\vert \hat{h}(\xi)\vert^{2} + \sum_{\ell=1}^{L} \vert \hat{g}^{\ell}(\xi) \vert^{2} = 1, \; \text{a.e.} \; \xi \in \mathbb{T},
\end{equation}

\noindent
then the finitely iterated shift-invariant filter bank of order $J$ associated with $h, g^{1}, g^{2}, \ldots, g^{L}$ is Parseval for each $J\in \mathbb{N}$.
\end{proposition}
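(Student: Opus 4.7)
The plan is to reduce the Parseval identity for $F_{J}$ to a pointwise identity on $\mathbb{T}$ and then establish that identity by induction on $J$. First, using the relations $(F_{J}x)_{J+1} = x*\overline{h}_{J}$ and $(F_{J}x)_{j,\ell} = x*\overline{g}^{\ell}_{j}$ together with Plancherel's theorem on $\ell^{2}(\mathbb{Z})$, the quantity to be controlled rewrites as
\begin{equation*}
\Vert (F_{J}x)_{J+1} \Vert^{2} + \sum_{\ell=1}^{L} \sum_{j=1}^{J} \Vert (F_{J}x)_{j,\ell} \Vert^{2} = \int_{\mathbb{T}} \vert \hat{x}(\xi)\vert^{2} \left( \vert \hat{h}_{J}(\xi)\vert^{2} + \sum_{\ell=1}^{L} \sum_{j=1}^{J} \vert \hat{g}^{\ell}_{j}(\xi) \vert^{2} \right) d\xi,
\end{equation*}
so it suffices to prove the pointwise identity $\vert \hat{h}_{J}(\xi)\vert^{2} + \sum_{\ell=1}^{L} \sum_{j=1}^{J} \vert \hat{g}^{\ell}_{j}(\xi) \vert^{2} = 1$ for almost every $\xi \in \mathbb{T}$, for each $J\in \mathbb{N}$.

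The base case $J=1$ is exactly the hypothesis \eqref{PReq}, since $\hat{h}_{1} = \hat{h}$ and $\hat{g}^{\ell}_{1} = \hat{g}^{\ell}$. For the inductive step, I will exploit the product structure coming from \eqref{alpha-j} and \eqref{beta-j}, namely $\hat{h}_{J+1}(\xi) = \hat{h}_{J}(\xi) \hat{h}(2^{J}\xi)$ and $\hat{g}^{\ell}_{J+1}(\xi) = \hat{h}_{J}(\xi) \hat{g}^{\ell}(2^{J}\xi)$. Factoring $\vert \hat{h}_{J}(\xi)\vert^{2}$ out of the level-$(J+1)$ contributions gives
\begin{equation*}
\vert \hat{h}_{J+1}(\xi)\vert^{2} + \sum_{\ell=1}^{L} \vert \hat{g}^{\ell}_{J+1}(\xi)\vert^{2} = \vert \hat{h}_{J}(\xi)\vert^{2} \left( \vert \hat{h}(2^{J}\xi)\vert^{2} + \sum_{\ell=1}^{L} \vert \hat{g}^{\ell}(2^{J}\xi)\vert^{2} \right) = \vert \hat{h}_{J}(\xi)\vert^{2},
\end{equation*}
where the last equality uses \eqref{PReq} evaluated at $2^{J}\xi$ (the identity is $1$-periodic, so dilation by an integer power of $2$ preserves it almost everywhere). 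Adding this to the inductive hypothesis collapses the telescoping sum and yields the identity at level $J+1$.

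Plugging the resulting pointwise identity back into the Plancherel expression above immediately gives $\Vert (F_{J}x)_{J+1} \Vert^{2} + \sum_{\ell,j} \Vert (F_{J}x)_{j,\ell} \Vert^{2} = \Vert x\Vert^{2}$, which is the Parseval property. There is no real obstacle here: the only point that requires any care is confirming that the product/telescoping structure of the iterated filters faithfully converts the one-step quadrature-type identity \eqref{PReq} into the multi-scale one. Once that is verified, the rest is bookkeeping.
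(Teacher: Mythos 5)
Your proof is correct and follows essentially the same route as the paper's: both rest on the telescoping identity $\vert \hat{h}_{J+1}(\xi)\vert^{2} + \sum_{\ell=1}^{L} \vert \hat{g}^{\ell}_{J+1}(\xi)\vert^{2} = \vert \hat{h}_{J}(\xi)\vert^{2}$ obtained from \eqref{PReq} at $2^{J}\xi$, together with induction on $J$. The only cosmetic difference is that you carry the induction at the level of the pointwise identity (in the spirit of Proposition \ref{stable-finite} with $A=B=1$), whereas the paper phrases it as $\Vert F_{J+1}x\Vert^{2} = \Vert F_{J}x\Vert^{2}$.
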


\begin{proof}
Fix $x\in \ell^{2}(\mathbb{Z})$.  Let $x_{j}$ and $y_{j}^{\ell}$ be defined as in the proof of Theorem \ref{infinite2finite}.  Notice that 
$$ \Vert F_{1} x\Vert^{2} = \Vert x_{1}\Vert^{2} + \sum_{\ell=1}^{L} \Vert y_{1}^{\ell}\Vert^{2} = \int_{\mathbb{T}} \vert \hat{x}(\xi) \vert^{2} \left \lbrack \vert \hat{h}(\xi)\vert^{2} + \sum_{\ell=1}^{L} \vert \hat{g}^{\ell}(\xi)\vert^{2} \right \rbrack \; d\xi.$$

\noindent
It follows immediately from \eqref{PReq} that the finitely iterated shift-invariant filter bank of order $1$ is Parseval.  Observe that
$$ \vert \hat{h}_{J+1}(\xi)\vert^{2} + \sum_{\ell=1}^{L} \vert \hat{g}_{J+1}^{\ell}(\xi)\vert^{2} = \vert \hat{h}_{J}(\xi)\vert^{2} \left \lbrack \vert \hat{h}(2^{J}\xi)\vert^{2} + \sum_{\ell=1}^{L} \vert \hat{g}^{\ell}(2^{J}\xi)\vert^{2} \right \rbrack = \vert \hat{h}_{J}(\xi)\vert^{2} $$

\noindent
for a.e. $\xi \in \mathbb{T}$ by \eqref{PReq}.  It is now straightforward to show that 
$$ \Vert x_{J+1}\Vert^{2} + \sum_{\ell=1}^{L} \Vert y_{J+1}^{\ell}\Vert^{2} = \Vert x_{J}\Vert^{2}$$

\noindent
so that $\Vert F_{J+1}x\Vert^{2} = \Vert F_{J}x\Vert^{2}$.  Thus, by induction, the finitely iterated shift-invariant filter bank of order $J$ is Parseval for each $J\in \mathbb{N}$.
\end{proof}

The following counterpart to Proposition \ref{stable-char} will be used in the Section \ref{separable}.  Its proof is nearly identical to that of Proposition \ref{stable-char} and will be omitted.

\begin{proposition} \label{stable-finite}
Let $h, g^{1}, g^{2}, \ldots, g^{L} \in \ell^{2}(\mathbb{Z})$.  Assume that $h$ is a low-pass filter and $g^{1}, g^{2}, \ldots, g^{L}$ are high-pass filters.  Fix $0<A\le B<\infty$.  The finitely iterated shift-invariant filter bank of order $J\in \mathbb{N}$ associated with $h, g^{1}, g^{2}, \ldots, g^{L}$ is stable with frame bounds $A$ and $B$ if and only if
\begin{equation} \label{frame-finite}
A\le \vert \hat{h}_{J}(\xi)\vert^{2} +  \sum_{\ell=1}^{L} \sum_{j=1}^{J} \vert \hat{g}^{\ell}_{j}(\xi) \vert^{2} \le B, \quad \text{a.e.} \; \xi \in \mathbb{T}.
\end{equation}

\noindent
Moreover, the finitely iterated shift-invariant filter bank of order $J$ associated with $h, g^{1}, g^{2}, \ldots, g^{L}$ is Bessel with bound $B$ if and only if the right-hand inequality of \eqref{frame-finite} holds.
\end{proposition}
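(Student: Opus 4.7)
The plan is to mirror the proof of Proposition \ref{stable-char} verbatim, with the sole modification being the inclusion of the low-pass contribution $(F_{J}x)_{J+1}=x*\overline{h}_{J}$ as an additional summand and the replacement of the infinite sum over $j\in\mathbb{N}$ by a finite sum over $1\le j\le J$. Because the sum is finite, no convergence or interchange issue arises, so Fubini--Tonelli is needed only in its most trivial form.

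First I would apply Parseval's identity on $\mathbb{T}$ together with the Fourier convolution theorem, which gives $\Vert x*\overline{g}^{\ell}_{j}\Vert^{2} = \int_{\mathbb{T}} \vert \hat{x}(\xi)\vert^{2} \vert \hat{g}^{\ell}_{j}(\xi)\vert^{2}\,d\xi$ and analogously $\Vert x*\overline{h}_{J}\Vert^{2} = \int_{\mathbb{T}} \vert \hat{x}(\xi)\vert^{2} \vert \hat{h}_{J}(\xi)\vert^{2}\,d\xi$. Summing these $LJ+1$ expressions and exchanging the finite sum with the integral then yields
\[ \Vert F_{J} x\Vert^{2} = \int_{\mathbb{T}} \vert \hat{x}(\xi) \vert^{2} \left\lbrack \vert \hat{h}_{J}(\xi) \vert^{2} + \sum_{\ell=1}^{L} \sum_{j=1}^{J} \vert \hat{g}^{\ell}_{j}(\xi) \vert^{2} \right\rbrack d\xi. \]
Combining this with $\Vert x\Vert^{2}=\int_{\mathbb{T}} \vert \hat{x}(\xi)\vert^{2}\,d\xi$, the pointwise inequality \eqref{frame-finite} immediately implies the frame inequality $A\Vert x\Vert^{2} \le \Vert F_{J}x\Vert^{2} \le B\Vert x\Vert^{2}$ for every $x\in \ell^{2}(\mathbb{Z})$, and likewise the Bessel variant.

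For the converse, I would apply the standard contradiction argument: if the bracketed quantity were to exceed $B$ on a set $E\subset \mathbb{T}$ of positive measure, choose $x\in \ell^{2}(\mathbb{Z})$ with $\hat{x}=\mathbbm{1}_{E}$ and observe that $\Vert F_{J}x\Vert^{2}>B\Vert x\Vert^{2}$, contradicting the Bessel bound. An analogous argument handles the lower bound, proving that \eqref{frame-finite} is necessary as well as sufficient. I do not anticipate any real obstacle here, since the finiteness of $J$ renders the proof a direct specialization of the one already given for Proposition \ref{stable-char}; the only bookkeeping issue is simply to carry the extra low-pass term $\vert \hat{h}_{J}(\xi)\vert^{2}$ through the computation.
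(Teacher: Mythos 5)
Your proposal is correct and follows exactly the route the paper intends: the paper omits the proof, noting it is nearly identical to that of Proposition \ref{stable-char}, and your argument is precisely that proof with the low-pass term $\vert\hat{h}_{J}(\xi)\vert^{2}$ carried along and the infinite sum replaced by a finite one. Your explicit indicator-function argument for necessity just fills in a step the paper's Proposition \ref{stable-char} proof states without detail, and it is sound.
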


\section{Examples}

Several examples will be presented in order to illustrate the usefulness of Theorems \ref{Bessel-bound} and \ref{frame-bound} for the construction of shift-invariant filter banks that are stable under infinitely many iterations.  In each example, The time frequency localization of the filters will be provided.  The time frequency localization of the filters will be described in terms of the quantities $\sigma_{n}^{2}$ and $\sigma_{\omega}^{2}$ as used by Haddad et al \cite{Haddad:etal1993} and, more recently, by Bhati et al \cite{BatiPachoriSharmaGadre2018}.  The time spread of $x\in \ell^{2}(\mathbb{Z})$ is given by
$$ \sigma_{n}^{2} = \frac{1}{\Vert x\Vert^{2}} \sum_{n\in \mathbb{Z}} (n-n_{0})^{2} \vert x(n)\vert^{2},$$

\noindent
where
$$ n_{0} = \frac{1}{\Vert x\Vert^{2}} \sum_{n\in \mathbb{Z}} n \vert x(n)\vert^{2}.$$

\noindent
The frequency spread of $x\in \ell^{2}(\mathbb{Z})$ is given by
$$ \sigma_{\omega}^{2} = \frac{(2\pi)^{2}}{\Vert x\Vert^{2}} \int_{-\frac{1}{2}}^{\frac{1}{2}} \vert \xi \hat{x}(\xi)\vert^{2} \; d\xi,$$

\noindent
where
$$ \omega_{0} = \frac{2\pi}{\Vert x\Vert^{2}} \int_{-\frac{1}{2}}^{\frac{1}{2}} \xi \vert \hat{x}(\xi)\vert^{2} \; d\xi.$$

\noindent
The definition of $\sigma_{\omega}^{2}$ requires modification for band-pass filters.  The reader is referred to the work of Haddad et al for the details \cite{Haddad:etal1993}. 

\subsection{Symmetric Filter Design}

Symmetric filters are often desired in applications for a variety of reasons, including the fact that such filters have linear phase.  However, as in the case of the usual discrete wavelet transform \cite[Theorem 8.1.4]{Daubechies1992}, the perfect reconstruction equation \eqref{PReq} is not consistent with symmetric, finitely supported filters.  To see this, assume that the filters $h$ and $g$, respectively, have the form
$$ \hat{h}(\xi) = \sum_{k=-n}^{n} h(k) e^{-2\pi i k \xi} \qquad \text{and} \qquad \hat{g}(\xi) = \sum_{k=-n}^{n} g(k) e^{-2\pi i k\xi},$$

\noindent
where $h(k), g(k) \in \mathbb{R}$ satisfy $h(-k)=h(k)$ and $g(-k)=g(k)$.  It is sufficient to consider the case in which the filter coefficients are symmetric about zero, since multiplication by $e^{2\pi i \xi}$ has no effect on \eqref{PReq}.  Moreover, it can be assumed, Without loss of generality, that $h(n)^{2}+g(n)^{2}>0$.  It follows that the expression $\vert \hat{h}(\xi)\vert^{2} + \vert \hat{g}(\xi)\vert^{2}$ can be written in the form
$$ C_{0} + C_{1} \cos{(2\pi \xi)} + \cdots + C_{2n} \cos{(4n\pi \xi)}$$

\noindent
with $C_{2n} = 2h(n)^{2}+2g(n)^{2}>0$.  It is thus impossible for \eqref{PReq} to hold, since this would require $C_{2n}=0$.  It is natural, then, to seek such symmetric filter designs for which the coefficients $C_{k}$, $k\ge 1$, are small.

\begin{example} \label{eg5.1}
As a first example of a symmetric filter design, consider a finitely supported low-pass filter $h$ of the form \eqref{haar-type} with $n=2$ and 
$$p(\xi) = (1+a) - a \cos{(2\pi \xi)},$$

\noindent
where $a$ is a positive constant.  In order that $h$ satisfy \eqref{p-condition} with $s=1$ the constant $a$ must be chosen so that $a< 1.5$.  If $a<1.5$ and $g$ is a finitely supported high-pass filter such that $\vert \hat{g}(\xi)\vert >0$ for $\frac{1}{4} \le \vert \xi \vert \le \frac{1}{2}$, then it follows from Theorems \ref{Bessel-bound} and \ref{frame-bound} that the infinitely iterated shift-invariant filter bank associated with $h$ and $g$ is stable.  Theorem \ref{infinite2finite} guarantees that the finitely iterated shift-invariant filter bank of order $J$ associated with $h$ and $g$ will be stable for each $J\in \mathbb{N}$, with uniform bounds.  One way to guarantee that the required conditions on $g$ are satisfied is to follow the usual construction of the high-pass filter for an orthonormal wavelet and choose $g$ according to $g(n) = (-1)^{1-k} h(1-k)$.  A numerical study was made in order choose the parameter $a$ to obtain small variation between the minimum and maximum values of $\vert \hat{h}(\xi)\vert^{2} + \vert \hat{g}(\xi)\vert^{2}$ varies, leading to an estimate of $a=0.410013$.  The coefficients of $h$ and $g$ for the corresponding filters are presented in Table \ref{table-5.1}.  Both the low- and high-pass filters yield $\sigma_{n}^{2}\approx 0.296$ and $\sigma_{\omega}^{2}\approx 1.08$, leading to the time-frequency product $\sigma_{n}^{2} \sigma_{\omega}^{2} \approx 0.320$.  The frequency response of the filters is shown in Figure \ref{fig-5.1a}, while Figure \ref{fig-5.1b} shows the variation in $\vert \hat{h}(\xi)\vert^{2} + \vert \hat{g}(\xi)\vert^{2}$.

\begin{table}[ht]
\caption{Filter coefficients for the filter bank of Example \ref{eg5.1}.} \label{table-5.1}

\centering
\begin{tabular}{c|c|c}
$n$ & $h$ & $g$ \\
\hline
    -2 &     -0.05125162  &    -0.05125162 \\
    -1 & \mm  0.25000000  &    -0.25000000 \\
 \mm 0 & \mm  0.60250325  & \mm 0.60250325 \\
 \mm 1 & \mm  0.25000000  &    -0.25000000 \\
 \mm 2 &     -0.05125162  &    -0.05125162 \\
\end{tabular}
\end{table}

\begin{figure}[ht]
    \centering
    
    \includegraphics[width=5.0in]{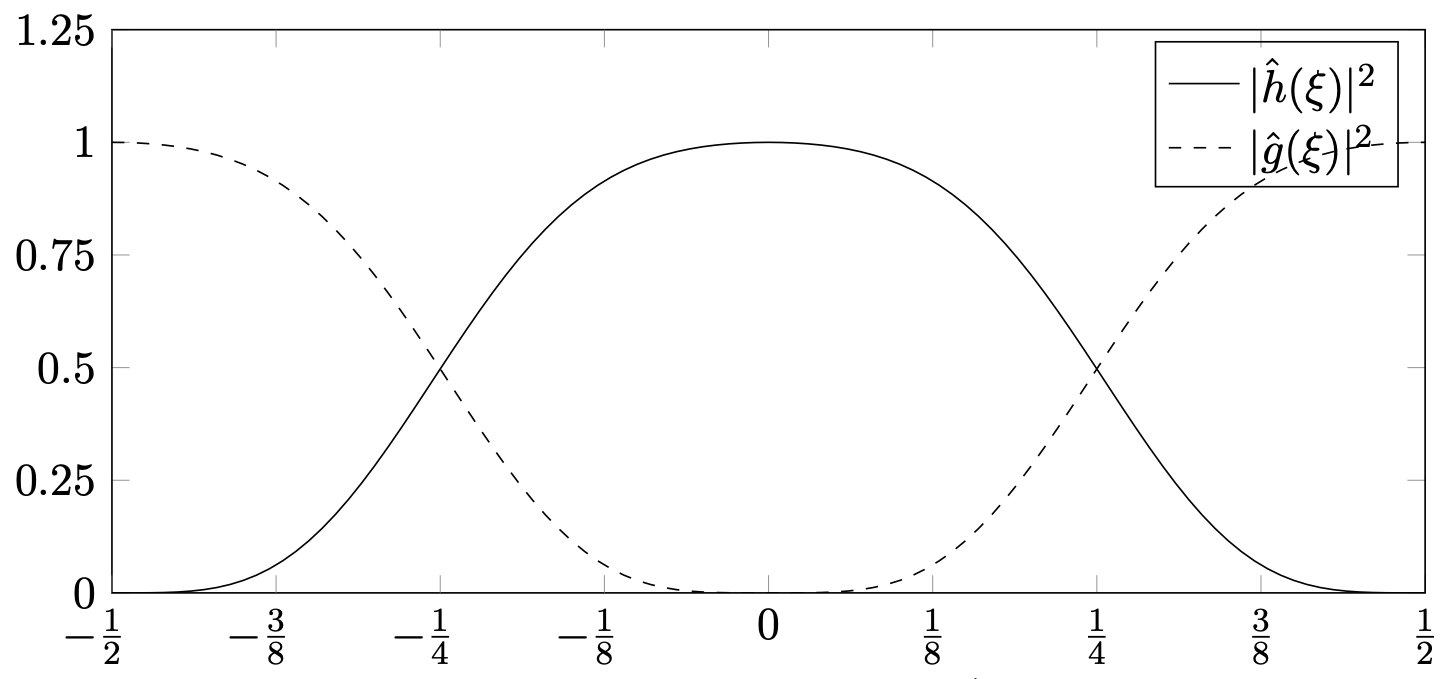}

    \caption{Frequency response for the filters $h$ and $g$ of Example \ref{eg5.1}.} \label{fig-5.1a}
\end{figure}

\begin{figure}[ht]
    \centering
    
    \includegraphics[width=5.0in]{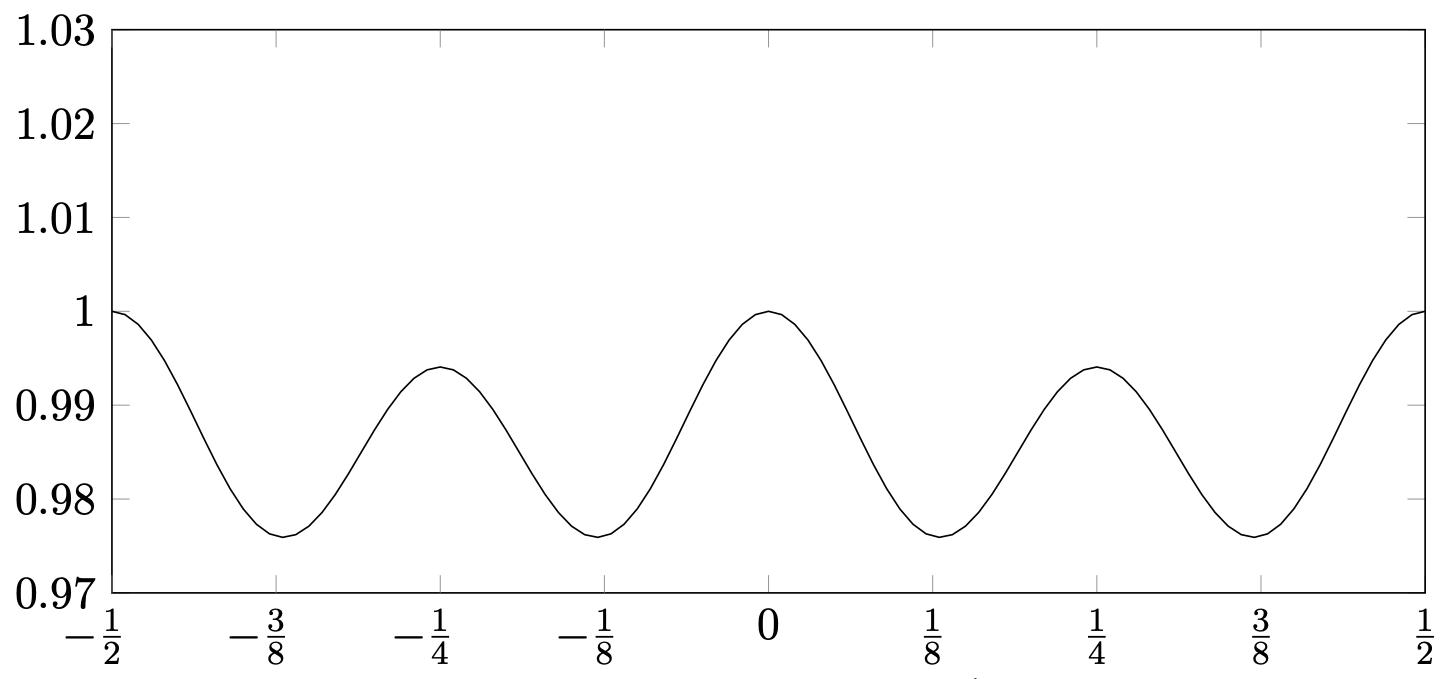}

    \caption{$\vert \hat{h}(\xi)\vert^{2}+\vert \hat{g}(\xi)\vert^{2}$ for the filters $h$ and $g$ of Example \ref{eg5.1}.} \label{fig-5.1b}
\end{figure}
\end{example}

The next example seeks to improve on Example \ref{eg5.1} by extending the support of the polynomial $p(\xi)$ in the filter design.  This will also add an additional degree of freedom, which can be used to tune the filters for smaller fluctuation in the quantity $\vert \hat{h}(\xi)\vert^{2}+\vert \hat{g}(\xi)\vert^{2}$.

\begin{example} \label{eg5.2}
This example will again consider a finitely supported low-pass filter $h$ of the form \eqref{haar-type} with $n=2$, but, here, the polynomial $p(\xi)$ will be assumed to take the form
$$ p(\xi) = (1+a+b) - a \cos{(2\pi \xi)} - b \cos{(4\pi \xi)},$$

\noindent
where $a$ and $b$ are positive constants.  Numerical experimentation led to the parameter values $a=0.32890122$ and $b=0.04248420$ and it is easy to verify that these values ensure $\vert p(\xi)\vert < 2$, which means the filters $h$ and $g$ will satisfy the hypotheses of Theorems \ref{Bessel-bound} and \ref{frame-bound} and thus the infinitely iterated shift-invariant filter bank associated with $h$ and $g$ is stable.  The filter coefficients for this filter bank are given in Table \ref{table-5.2}, while the frequency response of the filters is shown in Figure \ref{fig-5.2a}.  It is clear from Figure \ref{fig-5.2b} that $\vert \hat{h}(\xi)\vert^{2}+\vert \hat{g}(\xi)\vert^{2}$ shows very little oscillation with these filters and thus the stability constants (frame bounds) will hold up well under iteration.  In this case, the low- and high-pass filters again have identical time and frequency spreads with $\sigma_{n}^{2}\approx 0.305$ and $\sigma_{\omega}^{2}\approx 1.06$, leading to the time-frequency product $\sigma_{n}^{2} \sigma_{\omega}^{2} \approx 0.323$. 

\begin{table}[ht]
\caption{Filter coefficients for the filter bank of Example \ref{eg5.2}.} \label{table-5.2}

\centering
\begin{tabular}{c|c|c}
$n$ & $h$ & $g$ \\
\hline
    -3 &     -0.00531052  & \mm 0.00531052 \\
    -2 &     -0.05173370  &    -0.05173370 \\
    -1 & \mm  0.25531052  &    -0.25531052 \\
 \mm 0 & \mm  0.60346740  & \mm 0.60346740 \\
 \mm 1 & \mm  0.25531052  &    -0.25531052 \\
 \mm 2 &     -0.05173370  &    -0.05173370 \\
 \mm 3 &     -0.00531052  & \mm 0.00531052 \\
\end{tabular}
\end{table}

\begin{figure}[ht]
    \centering
    
    \includegraphics[width=5.0in]{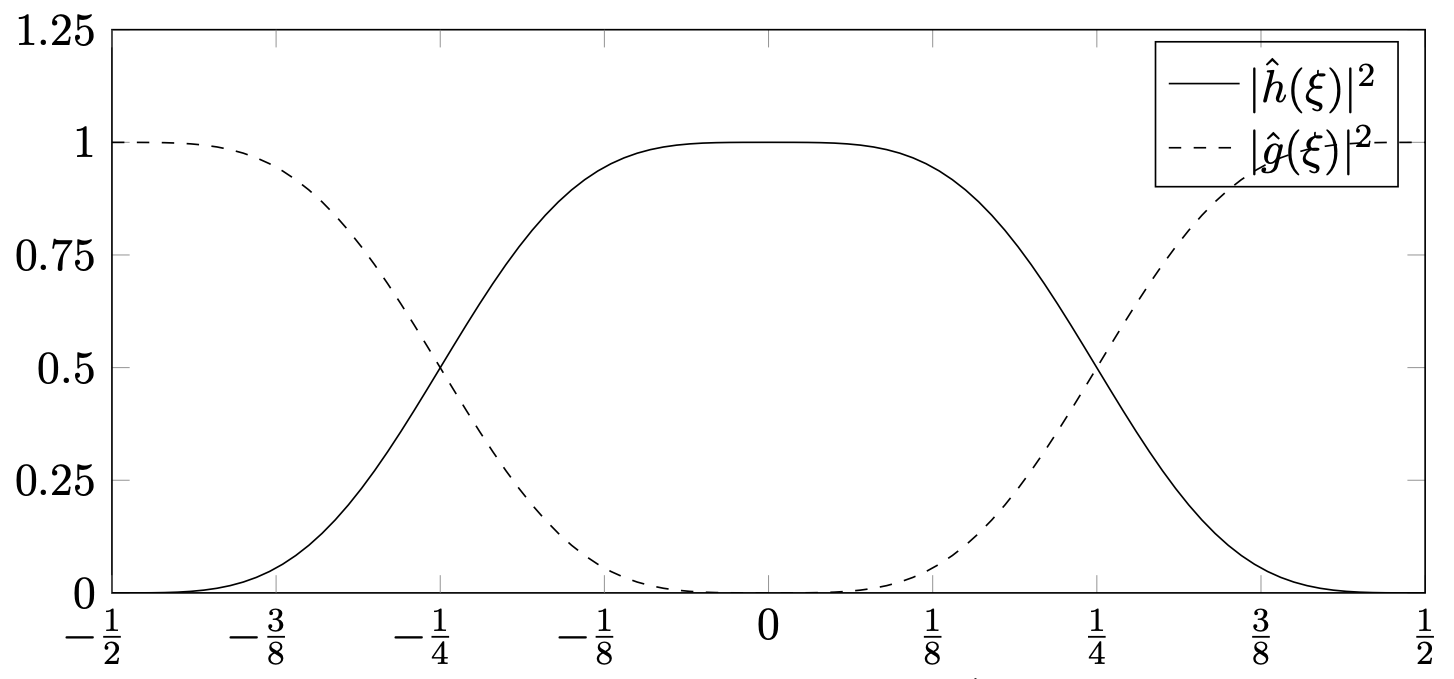}

    \caption{Frequency response for the filters $h$ and $g$ of Example \ref{eg5.2}.} \label{fig-5.2a}
\end{figure}

\begin{figure}[ht]
    \centering
    
    \includegraphics[width=5.0in]{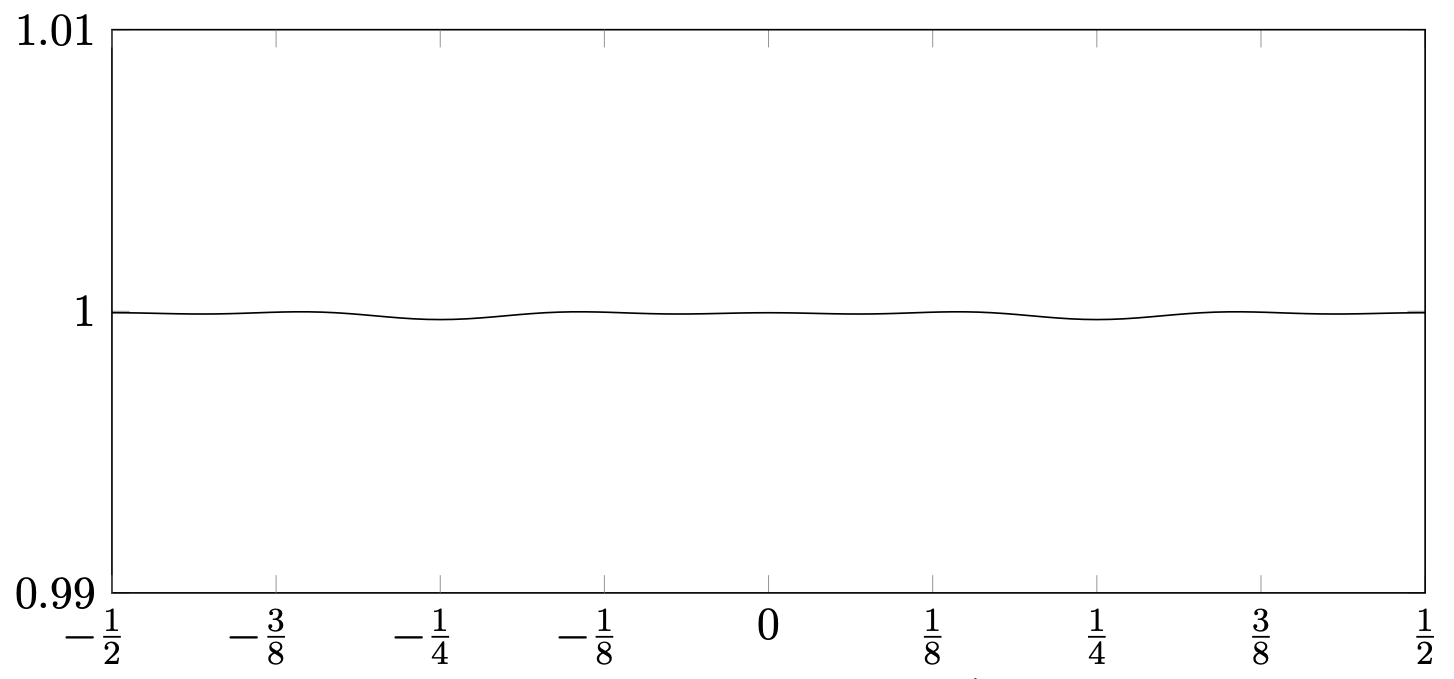}

    \caption{The quantity $\vert \hat{h}(\xi)\vert^{2}+\vert \hat{g}(\xi)\vert^{2}$ for the filters $h$ and $g$ of Example \ref{eg5.2}.} \label{fig-5.2b}
\end{figure}
\end{example}

\subsection{Filter Banks with Two High-Pass Filters} \label{twoHP}

This subsection is devoted to two filter bank designs in which two high-pass filters are employed in an attempt to achieve a finitely supported approximation of the following ideal filter arrangement,
\begin{equation} \label{ideal2HP}
\hat{h}(\xi) = \mathbbm{1}_{\lbrack -\frac{1}{4}, \frac{1}{4}\rbrack} (\xi), \quad  \hat{g}^{1}(\xi) = \mathbbm{1}_{\lbrack \frac{3}{8},-\frac{1}{4} \rbrack \cup \lbrack \frac{1}{4}, \frac{3}{8} \rbrack} (\xi), \quad \text{and} \quad \hat{g}^{2}(\xi) = \mathbbm{1}_{\lbrack -\frac{1}{2}, -\frac{3}{8}\rbrack \cup \lbrack \frac{3}{8}, \frac{1}{2}\rbrack} (\xi).
\end{equation}

\begin{example} \label{twoHP-1}
Consider the low-pass filter $h$ whose Fourier transform is given by
$$ \hat{h}(\xi) = 0.4+0.6 \cos{(2\pi \xi)} +0.1 \cos{(4\pi \xi)} - 0.1 \cos{(6\pi \xi)}.$$

\noindent
The high-pass filters will be of the form
$$ \hat{g}^{\ell}(\xi) = b_{\ell,0} + \sum_{k=1}^{4} b_{\ell,k} \cos{(2\pi k \xi)}, \quad \ell = 1,2,$$

\noindent
where the coefficients $b_{\ell,k}$ are chosen to interpolate specified points in the frequency response.  The coefficients of the high-pass filter $g^{1}$ are chosen to satisfy
$$ \hat{g}^{1}(0) = 0, \quad \hat{g}^{1}(5/32)=1/\sqrt{2}, \quad \hat{g}^{1}(\xi)(9/32)=1, \quad \hat{g}^{1}(3/8)=1/\sqrt{2}, \quad \text{and} \quad \hat{g}^{1}(1/2)=0,$$

\noindent
while the coefficients for $g^{2}$ are chosen according to
$$ \hat{g}^{2}(0) = 0, \quad \hat{g}^{2}(1/8)=0, \quad \hat{g}^{2}(\xi)(1/4)=0, \quad \hat{g}^{2}(3/8)=1/\sqrt{2}, \quad \text{and} \quad \hat{g}^{2}(1/2)=1.$$

\noindent
Numerical estimates of the filter coefficients are given in Table \ref{table-two-1} and the frequency response of each filter is shown in Figure \ref{fig-two-1}.  Meanwhile, the quantity $\vert \hat{h}(\xi)\vert^{2} + \vert \hat{g}^{1}(\xi)\vert^{2} + \vert \hat{g}^{2}(\xi)\vert^{2}$ for this filter bank is depicted in Figure \ref{fig-5.3b}.  The low-pass filter $h$ has $\sigma_{n}^{2}\approx 0.700$ and $\sigma_{\omega}^{2} \approx 0.543$ so that $\sigma_{n}^{2} \sigma_{\omega}^{2} \approx 0.380$.  The band-pass filter $g^{1}$ has $\sigma_{n}^{2}\approx 1.218$ and $\sigma_{\omega}^{2} \approx 0.244$ leading to $\sigma_{n}^{2} \sigma_{\omega}^{2} \approx 0.297$, while the high-pass filter $g^{2}$ has $\sigma_{n}^{2}\approx 1.091$ and $\sigma_{\omega}^{2} \approx 0.303$ with $\sigma_{n}^{2} \sigma_{\omega}^{2} \approx 0.331$.  The low-pass filter can be written in the form \eqref{haar-type} using $n=2$ with $p(\xi)$ given by
$$ p(\xi) = 0.2 e^{-2\pi i \xi} \left ( 1+6\cos{(2\pi \xi)}-2\cos{(4\pi \xi)} \right )$$

\noindent
and it is straightforward to verify that \eqref{p-condition} is satisfied with $s=1$.  It follows from Theorem \ref{Bessel-bound} that the infinitely iterated shift-invariant filter bank associated with $h, g^{1}, g^{2}$ is Bessel and it is routine to verify that these filters also satisfy the hypotheses of Theorem \ref{frame-bound}.  Hence, the infinitely iterated shift-invariant filter bank associated with $h, g^{1}, g^{2}$ is stable and by Theorem \ref{infinite2finite} the finitely iterated shift-invariant filter bank of order $j\in \mathbb{Z}$ associated with these filters will be stable with bounds independent of $j$.

\begin{table}[ht]
\caption{Coefficients of the high-pass filters in Example \ref{twoHP-1}.} \label{table-two-1}

\centering
\begin{tabular}{c|c|c|c}
$n$ & $h$ & $g^{1}$ & $g^{2}$ \\
\hline
    -4 &                &     -0.03511286 &     -0.02588834 \\
    -3 &    -0.05000000 & \mm  0.02810626 & \mm  0.00000000 \\
    -2 & \mm 0.05000000 &     -0.24357939 & \mm  0.12500000 \\
    -1 & \mm 0.30000000 &     -0.02810626 &     -0.25000000 \\
\mm  0 & \mm 0.40000000 & \mm  0.55738452 & \mm  0.30177670 \\
\mm  1 & \mm 0.30000000 &     -0.02810626 &     -0.25000000 \\
\mm  2 & \mm 0.05000000 &     -0.24357939 & \mm  0.12500000 \\
\mm  3 &    -0.05000000 & \mm  0.02810626 & \mm  0.00000000 \\
\mm  4 &                &     -0.03511286 &     -0.02588834 \\
\end{tabular}
\end{table}

\begin{figure}[ht]
    \centering
    \includegraphics[width=5.0in]{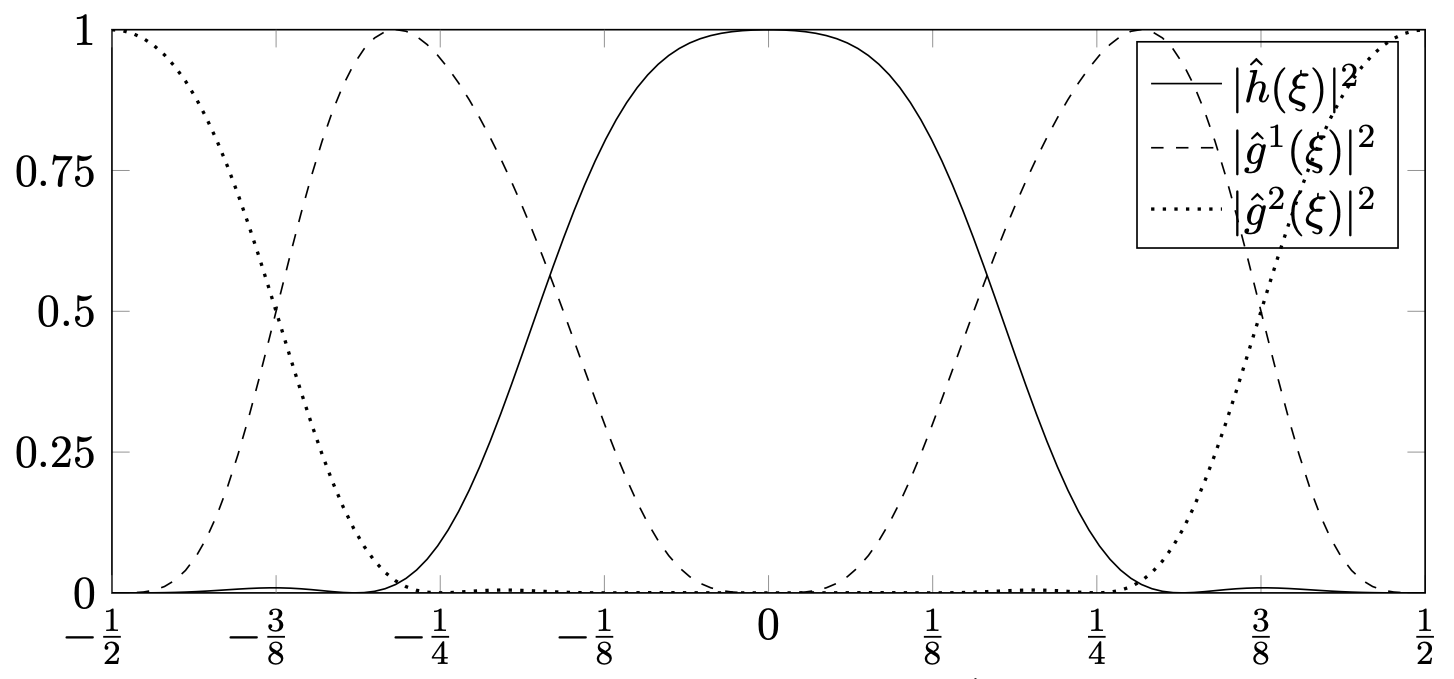}
    
    \caption{Frequency response for the filters $h, g^{1}, g^{2}$ of Example \ref{twoHP-1}.} \label{fig-two-1}
\end{figure}

\begin{figure}[ht]
    \centering
    
    \includegraphics[width=5.0in]{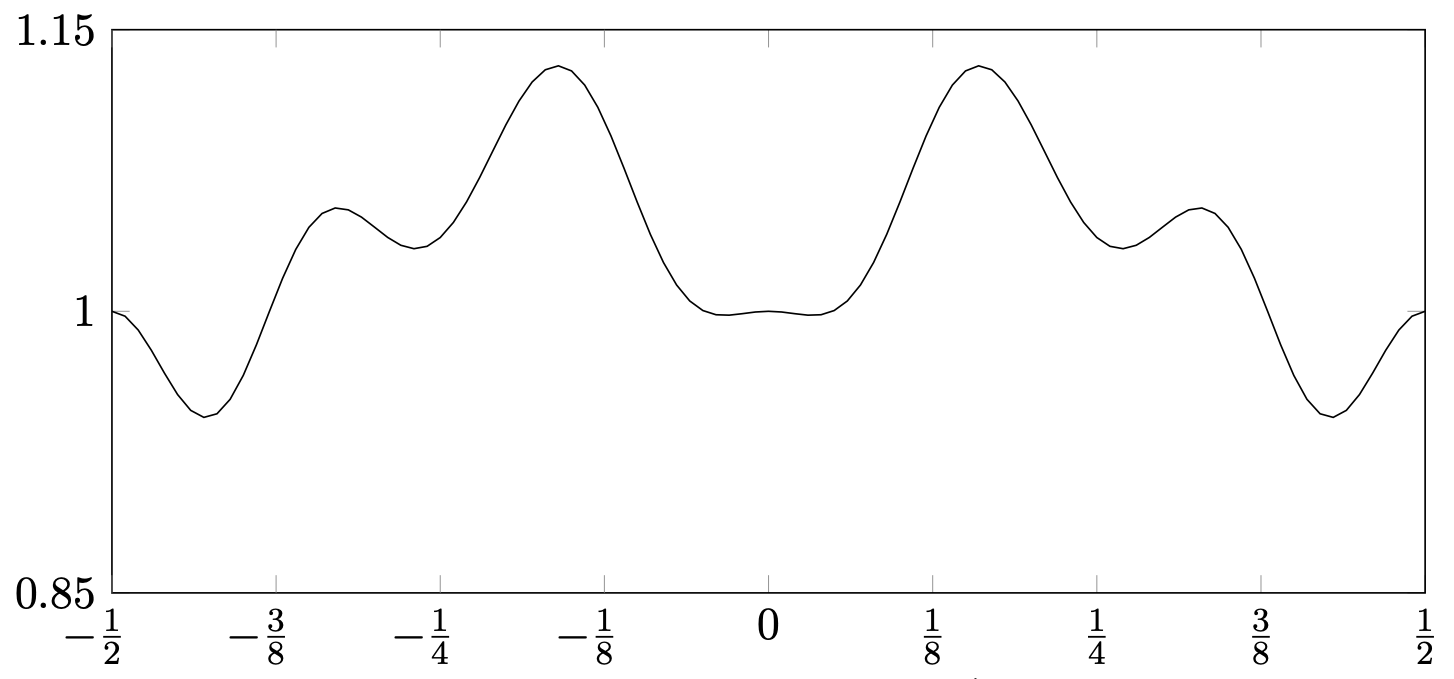}

    \caption{The quantity $\vert \hat{h}(\xi)\vert^{2}+\vert \hat{g}^{1}(\xi)\vert^{2}+\vert \hat{g}^{1}(\xi)\vert^{2}$ for the filters $h$, $g^{1}$, and $g^{2}$ of Example \ref{twoHP-1}.} \label{fig-5.3b}
\end{figure}
\end{example}

\begin{example} \label{twoHP-2}
It is also possible to construct finitely supported approximations of \eqref{ideal2HP} that employ perfect reconstruction filters satisfying \eqref{PReq}, which, by Proposition \ref{PerfectRecon}, guarantees that the associated finitely iterated shift-invariant filter banks will be Parseval for any number of iterations.  The idea is to employ Bezout's theorem \cite[Theorem 6.1.1]{Daubechies1992} in conjunction with the Riesz factorization lemma \cite[Lemma 6.1.3]{Daubechies1992} to construct finitely supported filers $h$, $g^{1}$, and $g^{2}$ such that $\vert \hat{h}(\xi)\vert^{2} + \vert \hat{g}^{1}(\xi)\vert^{2} + \vert \hat{g}^{2}(\xi)\vert^{2} = 1$.  The use of Bezout's theorem will be carried out in two steps.  Let $z=\sin^{2}{(\pi \xi)}$ and set $s_{0}=\frac{5\pi}{16}$ and $s_{1}=\frac{\pi}{4}$.  It follows from Bezout's theorem that there exist unique polynomials $p(z)$ and $q(z)$ of degree one and two, respectively, such that
$$ (s_{0}^{2}-z)^{2} (1-z) p(z) + z^{2} q(z) = 1.$$

\noindent
This factorization should be interpreted so that $\vert \hat{h}(z)\vert^{2} = (s^{2}-z)(1-z) p(z)$, while $\vert \hat{g}^{1}(z)\vert^{2}+\vert \hat{g}^{2}(z)\vert^{2} = z^{2} q(z)$.  This forces the Fourier transform of the low-pass filter to vanish at both $\xi=\pm \half$ and $\xi=\pm \frac{5}{16}$, while also introducing zeros for the high-pass filters at $\xi=0$.  The next stage of the factorization seeks unique polynomial solutions $r(z)$ and $s(z)$ of degrees two and one, respectively, such that
$$ (1-z)^{2} r(z) + z(s_{1}^{2}-z)^{2} s(z) = 1.$$

\noindent
This identity can be inserted in the term $z^{2} q(z)$ of the first factorization, producing
$$ (s_{0}^{2}-z)^{2} (1-z) p(z) + z^{2} (1-z)^{2} q(z) r(z) + z^{3} (s_{1}^{2}-z)^{2} q(z) s(z) = 1$$

\noindent
and leading to $\vert \hat{g}^{1}(z)\vert^{2} = z^{2} (1-z)^{2} q(z) r(z)$ and $\vert \hat{g}^{2}(z)\vert^{2} = z^{3} (s_{1}^{2}-z)^{2} q(z) s(z)$.  Thus the Fourier transform of $\hat{g}^{1}$ must vanish at $\xi=0$ and $\xi=\pm \half$, while the Fourier transform of $\hat{g}^{2}$ must vanish at $\xi=0$ and $\xi=\pm \frac{1}{4}$.  The Riesz factorization lemma is then used to express $\hat{h}(\xi)$, $\hat{g}^{1}(\xi)$, and $\hat{g}^{2}(\xi)$ as trigonometric polynomials.  Numerical estimates of the coefficients for $h$, $g^{1}$, and $g^{2}$ are provided in Table \ref{table-two-2} and the frequency response of each filter is illustrated in Figure \ref{fig-two-2}.  As one might expect, the time-frequency localization of this perfect reconstruction filter bank is inferior to that of the non-perfect reconstruction filter bank described in Example \ref{twoHP-1}.  The low-pass filter $h$ has $\sigma_{n}^{2}\approx 0.858$ and $\sigma_{\omega}^{2} \approx 0.674$ so that $\sigma_{n}^{2} \sigma_{\omega}^{2} \approx 0.578$.  The band-pass filter $g^{1}$ has $\sigma_{n}^{2}\approx 2.007$ and $\sigma_{\omega}^{2} \approx 0.1712$ leading to $\sigma_{n}^{2} \sigma_{\omega}^{2} \approx 0.344$, while the high-pass filter $g^{2}$ has $\sigma_{n}^{2}\approx 1.686$ and $\sigma_{\omega}^{2} \approx 0.669$ with $\sigma_{n}^{2} \sigma_{\omega}^{2} \approx 1.128$. 

\begin{table}[ht]
\caption{Filter coefficients for the filter bank of Example \ref{twoHP-2}.} \label{table-two-2}

\centering
\begin{tabular}{c|c|c|c}
$n$ & $h$ & $g^{1}$ & $g^{2}$ \\
\hline
    -4 &                 &     -0.03342562 & \mm  0.01271264 \\
    -3 &                 &     -0.10296278 & \mm  0.04169253 \\
    -2 &     -0.10956917 &     -0.05386255 & \mm  0.01150312 \\
    -1 & \mm  0.09694723 & \mm  0.33807931 &     -0.13643441 \\
\mm  0 & \mm  0.31919216 & \mm  0.13363824 &     -0.06718653 \\
\mm  1 & \mm  0.40305277 &     -0.36727027 & \mm  0.20830747 \\
\mm  2 & \mm  0.29037701 & \mm  0.02801366 &     -0.26150312 \\
\mm  3 &                 & \mm  0.13215374 & \mm  0.38643441 \\
\mm  4 &                 &     -0.07436373 &     -0.19552611 \\
\end{tabular}
\end{table}

\begin{figure}[ht]
    \centering
    \includegraphics[width=5.0in]{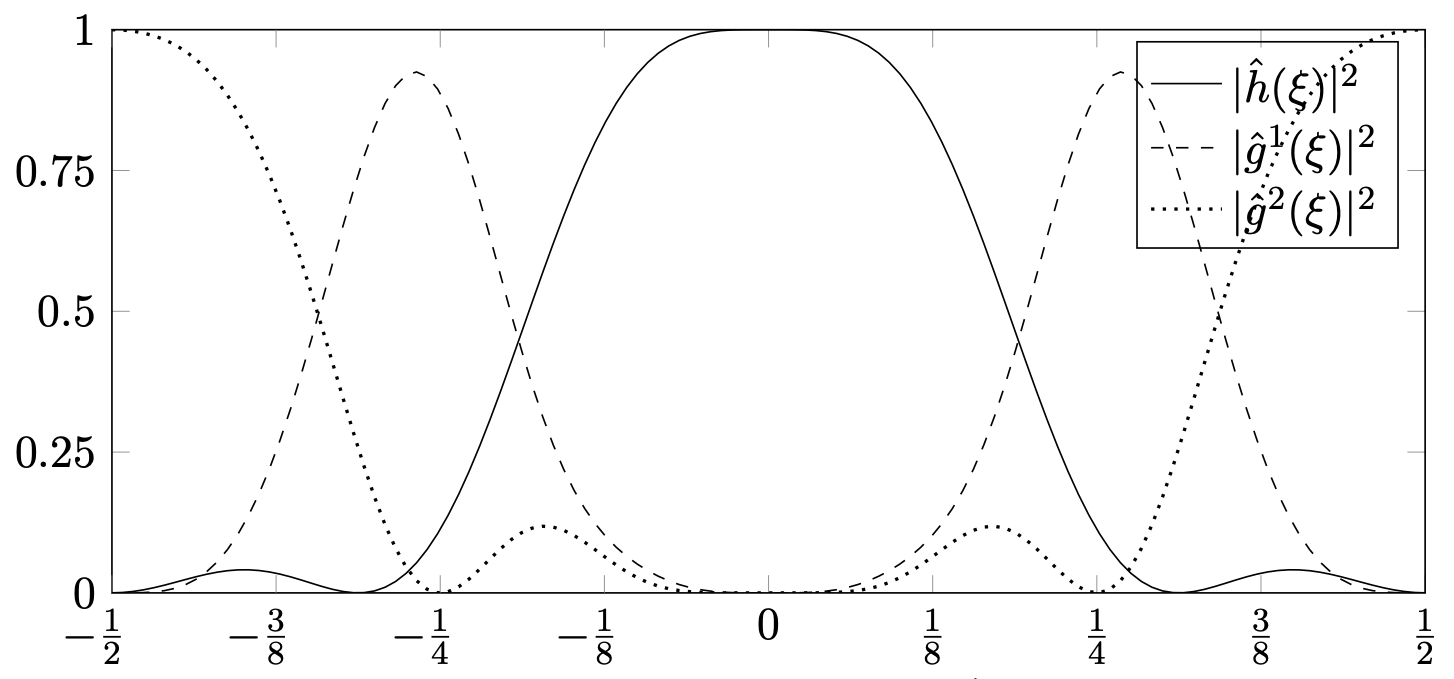}
    
    \caption{Frequency response for the filters $h, g^{1}, g^{2}$ of Example \ref{twoHP-2}.} \label{fig-two-2}
\end{figure}
\end{example}

\section{Separable Shift-Invariant Filter Banks in Two Dimensions} \label{separable}

Owing to the importance of discrete wavelet transforms in two dimensional applications, this section is devoted to a brief investigation of shift-invariant filter banks for $\ell^{2}(\mathbb{Z}^{2})$.  In particular, it will be shown that two-dimensional finitely iterated shift-invariant filter banks arising as the separable product of stable one-dimensional finitely iterated shift-invariant filter banks are also stable with bounds determined from the one-dimensional frame bounds.

The Fourier transform of $x\in \ell^{2}(\mathbb{Z}^{2})$ is defined by
$$ \hat{x}(\xi_{1},\xi_{2}) = \sum_{(k_{1},k_{2})\in \mathbb{Z}^{2}} x(k_{1},k_{2}) e^{-2\pi i (k_{1} \xi_{1} + k_{2} \xi_{2})}, \quad (\xi_{1}, \xi_{2}) \in \mathbb{T}^{2}.$$

\noindent
The convolution of $x,y \in \ell^{2}(\mathbb{Z}^{2})$ will be denoted by $x*y$ and is defined by
$$ (x*y)(k_{1},k_{2}) = \sum_{(n_{1}, n_{2})\in \mathbb{Z}^{2}} y(n_{1},n_{2}) x(k_{1}-n_{1},k_{2}-n_{2}), \quad (k_{1},k_{2})\in \mathbb{Z}^{2}.$$

\noindent
The upsampling operator $U:\ell^{2}(\mathbb{Z}^{2}) \rightarrow \ell^{2}(\mathbb{Z}^{2})$ acts on $x\in \ell^{2}(\mathbb{Z}^{2})$ by
$$ (Ux)(k_{1},k_{2}) = \begin{cases} x(n_{1},n_{2}), & k_{1}=2n_{1}, \; k_{2}=2n_{2} \\ 0, & \text{otherwise.} \end{cases}$$

\noindent
In accordance with the one-dimensional counterparts, these definitions lead to the identities 
$$\widehat{x*y}(\xi_{1}, \xi_{2}) = \hat{x}(\xi_{1}, \xi_{2}) \hat{y}(\xi_{1}, \xi_{2}), \quad (\xi_{1}, \xi_{2}) \in \mathbb{T}^{2}$$

\noindent
and 
$$\widehat{Ux}(\xi_{1}, \xi_{2}) = \hat{x}(2\xi_{1}, 2\xi_{2}), \quad (\xi_{1}, \xi_{2}) \in \mathbb{T}^{2}.$$

It will be convenient to alter the notation used for low- and high-pass filters in this discussion, as follows.  A typical two-dimensional filter bank will consist of filters $H^{0}, H^{1}, \ldots, H^{L}\in \ell^{2}(\mathbb{Z}^{2})$, $L\in \mathbb{N}$, with the convention that $H^{0}$ is a low-pass filter and $H^{\ell}$, $1\le \ell \le L$, are high-pass filters.  A low-pass filter $H^{0}$ will be assumed to satisfy $\widehat{H}^{0}(0,0)=1$, while high-pass filters are assumed to satisfy $\widehat{H}^{\ell}(0,0)=0$.  The notions of iterated low- and high-pass filters carry over to two dimensions without difficulty.  The iterated low-pass filter of order $j\in \mathbb{N}$ is defined by $H_{j}^{0} = H^{0}*UH^{0}*\cdots*U^{j-1}H^{0}$, while the iterated high-pass filters of order $j$ are defined by $H_{j}^{\ell} = H_{j-1}^{0}*U^{j-1}H^{\ell}$, $1\le \ell \le L$.  Following the notation established in Theorem \ref{infinite2finite}, given $x\in \ell^{2}(\mathbb{Z}^{2})$, define $x_{j} = x*H_{j}^{0}$ and $y_{j}^{\ell} = x*H_{j}^{\ell}$, $1\le \ell \le L$.

\begin{definition}
Let $H^{0}, H^{1}, \ldots, H^{L} \in \ell^{2}(\mathbb{Z}^{2})$ such that $H^{0}$ is a low-pass filter and $H^{\ell}$, $1\le \ell \le L$, are high-pass filters.  The finitely iterated shift-invariant shift-invariant filter bank of order $J\in \mathbb{N}$ associated with $H^{0}, H^{1}, \ldots, H^{L}$ is \emph{stable} provided that there exist positive constants $0<A\le B<\infty$ such that
$$ A \Vert x\Vert^{2} \le \Vert x_{J}\Vert^{2} + \sum_{\ell=1}^{L} \Vert y_{j}^{\ell}\Vert^{2} \le B \Vert x\Vert^{2}$$

\noindent
holds for each $x\in \ell^{2}(\mathbb{Z}^{2})$.
\end{definition}

The following result is a natural analog of Proposition \ref{stable-char} and will be stated without proof.

\begin{proposition} \label{stable2D}
Let $H^{0}, H^{1}, \ldots, H^{L} \in \ell^{2}(\mathbb{Z}^{2})$ where $H^{0}$ is a low-pass filter and $H^{1}, \ldots, H^{L}$ are high-pass filters.  The finitely iterated shift-invariant filter bank of order $J\in \mathbb{N}$ associated with $H^{0}, H^{1}, \ldots, H^{L}$ is stable if and only if there exist positive constants $0<A\le B<\infty$ such that
\begin{equation} \label{char2D}
A \le \vert \widehat{H}_{J}^{0}(\xi_{1}, \xi_{2}) \vert^{2} + \sum_{j=1}^{J} \sum_{\ell=1}^{L} \vert \widehat{H}_{j}^{\ell} (\xi_{1}, \xi_{2})\vert^{2} \le B, \quad \text{a.e.} \; \xi \in \mathbb{T}^{2}.
\end{equation}

\noindent
Moreover, the finitely iterated shift-invariant filter bank of order $J$ associated with $H^{0}, H^{1}, \ldots, H^{L}$ is Bessel with bound $B$ if and only if the right-hand inequality of \eqref{frame-char} holds.
\end{proposition}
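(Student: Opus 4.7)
The plan is to mirror the proof of Proposition \ref{stable-char} essentially verbatim, replacing $\mathbb{T}$ by $\mathbb{T}^{2}$ throughout. The core observation is that the Plancherel identity on $\ell^{2}(\mathbb{Z}^{2})$ reads $\Vert z\Vert^{2} = \int_{\mathbb{T}^{2}} \vert \hat{z}(\xi_{1},\xi_{2})\vert^{2}\, d\xi_{1} d\xi_{2}$, and that convolution still becomes multiplication under the Fourier transform on $\mathbb{Z}^{2}$. Consequently, for any $x\in \ell^{2}(\mathbb{Z}^{2})$, the quantities $x_{J}=x*H_{J}^{0}$ and $y_{j}^{\ell}=x*H_{j}^{\ell}$ satisfy $\widehat{x}_{J} = \widehat{H}_{J}^{0}\,\hat{x}$ and $\widehat{y}_{j}^{\ell} = \widehat{H}_{j}^{\ell}\,\hat{x}$.

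First I would apply Plancherel to each term in $\Vert x_{J}\Vert^{2} + \sum_{j=1}^{J}\sum_{\ell=1}^{L} \Vert y_{j}^{\ell}\Vert^{2}$ to rewrite it as a sum of non-negative integrals over $\mathbb{T}^{2}$. Since the integrands are non-negative and the outer sum is finite, Fubini--Tonelli lets me interchange sum and integral to obtain
\begin{equation*}
\Vert x_{J}\Vert^{2} + \sum_{j=1}^{J}\sum_{\ell=1}^{L} \Vert y_{j}^{\ell}\Vert^{2} = \int_{\mathbb{T}^{2}} \left( \vert \widehat{H}_{J}^{0}(\xi_{1},\xi_{2})\vert^{2} + \sum_{j=1}^{J}\sum_{\ell=1}^{L} \vert \widehat{H}_{j}^{\ell}(\xi_{1},\xi_{2})\vert^{2} \right) \vert \hat{x}(\xi_{1},\xi_{2})\vert^{2}\, d\xi_{1}\, d\xi_{2}.
\end{equation*}
Denote the bracketed factor by $S_{J}(\xi_{1},\xi_{2})$.

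The sufficiency direction is then immediate: if $A\le S_{J}\le B$ a.e.\ on $\mathbb{T}^{2}$, then multiplying by $\vert \hat{x}\vert^{2}$, integrating, and applying Plancherel to $\Vert x\Vert^{2}$ yields the claimed two-sided bound for every $x\in \ell^{2}(\mathbb{Z}^{2})$. For necessity I would argue by contradiction in the standard way: if $S_{J}>B$ (respectively $S_{J}<A$) on a measurable set $E\subset \mathbb{T}^{2}$ of positive measure, choose $x\in \ell^{2}(\mathbb{Z}^{2})$ with $\hat{x}=\mathbbm{1}_{E'}$ on a bounded subset $E'\subset E$ of positive measure (such $x$ exists since $L^{2}(\mathbb{T}^{2})\cong \ell^{2}(\mathbb{Z}^{2})$ under the inverse Fourier transform), and observe that the corresponding frame inequality fails. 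The final assertion about the Bessel bound follows from the same argument using only the upper inequality; I would simply note that the reference to \eqref{frame-char} in the statement should read \eqref{char2D}.

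No new obstacle arises beyond what was already handled in Proposition \ref{stable-char}; the only bookkeeping change is that $\xi$ is now the pair $(\xi_{1},\xi_{2})\in \mathbb{T}^{2}$ and the Lebesgue measure is two-dimensional. Since the proof is otherwise formally identical, writing it out in detail is unnecessary and the author's decision to omit it is justified.
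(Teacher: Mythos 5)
Your proposal is correct and follows exactly the route the paper intends: the paper omits the proof precisely because it is the verbatim analog of Proposition \ref{stable-char}, i.e., Plancherel on $\ell^{2}(\mathbb{Z}^{2})$ plus Fubini--Tonelli to reduce the frame inequality to the pointwise a.e.\ bounds on $S_{J}$, with the characteristic-function argument giving necessity. Your observation that the reference to \eqref{frame-char} in the final sentence should read \eqref{char2D} is also correct.
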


Consider two one-dimensional filter banks consisting of filters $h^{0}, h^{1}, \ldots, h^{L}$ and $g^{0}, g^{1}, \ldots, g^{M}$, respectively, where $h^{0}$ and $g^{0}$ are one-dimensional low-pass filters and $h^{\ell}$, $1\le \ell \le L$, and $g^{m}$, $1\le m \le M$, are one-dimensional high-pass filters.  The \emph{separable product} of these two filter banks is a two-dimensional filter bank consisting of the filters $H^{\ell,m}\in \ell^{2}(\mathbb{Z}^{2})$, $0\le \ell \le L$ and $0\le m\le M$, defined according to
\begin{equation} \label{sep-filters}
H^{\ell,m}(k_{1},k_{2}) = h^{\ell}(k_{1}) \, g^{m}(k_{2}), \quad (k_{1}, k_{2}) \in \mathbb{Z}^{2}.
\end{equation}

\noindent
Notice that $\widehat{H}^{\ell,m}(\xi_{1},\xi_{2}) = \hat{h}^{\ell}(\xi_{1}) \hat{g}^{m}(\xi_{2})$.  The filter $H^{0,0}$ is the low-pass filter for the two-dimensional filter bank, satisfying $\widehat{H}^{0,0}(0,0)=1$, $\widehat{H}^{0,0}(\half,\xi_{2})=0$, and $\widehat{H}^{0,0}(\xi_{1},\half)=0$.  The remaining filters $H^{\ell,m}$, where either $\ell$ or $m$ is non-zero, are high-pass filters and each satisfies $\widehat{H}^{\ell,m}(0,0)=0$.

The main result of this section establishes a formula for the frame bounds of a two-dimensional shift-invariant filter bank which arises as the separable product of two stable one-dimensional shift-invariant filter banks.

\begin{theorem} \label{separableFB}
Let $h^{0}, h^{1}, \ldots, h^{L}\in \ell^{2}(\mathbb{Z})$ and $g^{0}, g^{1}, \ldots, g^{M}\in \ell^{2}(\mathbb{Z})$ where $h^{0}$ and $g^{0}$ are one-dimensional low-pass filters and and $h^{\ell}$, $1\le \ell \le L$, and $g^{m}$, $1\le m \le M$, are one-dimensional high-pass filters.  Assume that the finitely iterated shift-invariant filter banks of order $J\in \mathbb{N}$ associated with $h^{0}, h^{1}, \ldots, h^{L}$ and $g^{0}, g^{1}, \ldots, g^{M}$ are stable with bounds $0<A_{1}\le B_{1}<\infty$ and $0<A_{2}\le B_{2}<\infty$, respectively, independent of $J$.  If $H^{\ell,m}\in \ell^{2}(\mathbb{Z}^{2})$, $0\le \ell \le L$ and $0\le m\le M$, are defined according to \eqref{sep-filters}, then the finitely iterated shift-invariant filter bank of order $J\in \mathbb{N}$ associated with the filters $H^{\ell,m}$, $0\le \ell \le L$ and $0\le m\le M$, is stable with bounds $A_{1}A_{2}$ and $B_{1}B_{2}$.
\end{theorem}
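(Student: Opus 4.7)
The plan is to apply Proposition~\ref{stable2D} and reduce the claim to the pointwise inequality
\[
 A_1A_2 \;\le\; T_J(\xi_1,\xi_2) \;:=\; |\widehat{H}_J^{0,0}(\xi)|^2 + \sum_{j=1}^{J}\sum_{(\ell,m)\ne(0,0)}|\widehat{H}_j^{\ell,m}(\xi)|^2 \;\le\; B_1B_2
\]
for a.e.\ $\xi\in\mathbb{T}^2$. First, I would verify by induction on $j$ that separability passes through the iteration; using $U(f\otimes g)=(Uf)\otimes(Ug)$ and the fact that convolution respects tensor products, one gets $\widehat{H}_j^{\ell,m}(\xi_1,\xi_2)=\hat h_j^\ell(\xi_1)\,\hat g_j^m(\xi_2)$ for every $j\ge 1$, where for $\ell=0$ (resp.\ $m=0$) the symbol denotes the 1D iterated low-pass.

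With this factorization I would group the channels by the common level $j$ and use the product identity $\sum_{\ell,m}|\hat h_j^\ell|^2|\hat g_j^m|^2 = R_j^{(1)}(\xi_1)\,R_j^{(2)}(\xi_2)$ to obtain the key expression
\[
 T_J(\xi_1,\xi_2) \;=\; \sum_{j=1}^{J} R_j^{(1)}(\xi_1)\,R_j^{(2)}(\xi_2) \;-\; \sum_{j=1}^{J-1}|\hat h_j^0(\xi_1)|^2|\hat g_j^0(\xi_2)|^2,
\]
where $R_j^{(i)}=\sum_{k}|\widehat{\text{filter}}_j^k|^2$ is the 1D single-step sum at level $j$. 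An entirely parallel expression, namely $\sigma_J^{(i)}(\xi_i)=\sum_{j=1}^{J}R_j^{(i)}-\sum_{j=1}^{J-1}|\text{1D LP}_j|^2$, is provided by Proposition~\ref{stable-finite}, and the uniform 1D stability gives $A_i\le\sigma_J^{(i)}(\xi_i)\le B_i$ for every $J$ and $\xi_i$.

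The remaining task is to translate these 1D pointwise bounds into the 2D pointwise bound. I would argue by induction on $J$, using in tandem the recursive identities
\[
 T_J - T_{J-1} \;=\; |\widehat{H}_{J-1}^{0,0}(\xi)|^2\!\left[\sigma_1^{(1)}(2^{J-1}\xi_1)\sigma_1^{(2)}(2^{J-1}\xi_2)-1\right],
\qquad
 \sigma_J^{(i)} - \sigma_{J-1}^{(i)} \;=\; |\text{1D LP}_{J-1}|^2\!\left[\sigma_1^{(i)}(2^{J-1}\xi_i)-1\right],
\]
together with the base case $T_1=\sigma_1^{(1)}\sigma_1^{(2)}\in[A_1A_2,B_1B_2]$. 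Aggregating the recursion yields $T_J-1=\sum_{j=1}^{J}|\widehat{H}_{j-1}^{0,0}|^2[\sigma_1^{(1)}(2^{j-1}\xi_1)\sigma_1^{(2)}(2^{j-1}\xi_2)-1]$ and analogous telescoping for $\sigma_J^{(i)}-1$, from which the bounds on $T_J$ follow by a careful accounting that uses the full 1D uniform bounds (applicable at every order) to dominate the cumulative correction.

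The main obstacle is precisely this last step: the one-step increment $|\widehat{H}_{J-1}^{0,0}|^2[M_J-1]$ has indeterminate sign and magnitude on the order of $(B_1B_2-1)|\widehat{H}_{J-1}^{0,0}|^2$, so a naive induction on $J$ breaks whenever $B_1B_2>1>A_1A_2$. The resolution is to leverage the assumption that the 1D bounds are uniform in~$J$: this forces a cancellation between the incremental corrections $G_{j-1}(M_j-1)$ and the cumulative growth of $|\widehat{H}_j^{0,0}|^2$, paralleling exactly the mechanism by which the 1D sums $\sum_j\alpha_{j-1}(\tilde u_j-1)$ remain bounded uniformly in $J$. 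Making this mechanism explicit through the joint telescoping of the 1D and 2D recursions is the heart of the argument.
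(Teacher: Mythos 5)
Your setup is sound and overlaps with the paper's own argument: the reduction to a pointwise bound via Proposition \ref{stable2D}, the observation that separability survives iteration so that $\widehat{H}_j^{\ell,m}(\xi_1,\xi_2)=\hat h_j^\ell(\xi_1)\,\hat g_j^m(\xi_2)$ for all $j$, and your exact expressions for $T_J$ and its one-step increments are all correct. But the argument stops exactly where the theorem has to be proved. You acknowledge that the incremental scheme produces corrections $|\widehat{H}_{J-1}^{0,0}|^2\bigl[\sigma_1^{(1)}(2^{J-1}\xi_1)\sigma_1^{(2)}(2^{J-1}\xi_2)-1\bigr]$ of indeterminate sign, that a naive induction on $J$ fails when $B_1B_2>1>A_1A_2$, and that the promised resolution (a cancellation ``forced'' by uniformity of the 1D bounds) would be ``the heart of the argument'' --- but that heart is never supplied. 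As written, no inequality $A_1A_2\le T_J\le B_1B_2$ is actually derived, so this is a plan with its central step missing rather than a proof.

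The missing idea --- and the way the paper closes the argument --- is to apply the uniform 1D stability at every intermediate order $j\le J$, not merely in the aggregate form $A_i\le\sigma_J^{(i)}\le B_i$, but as a pointwise bound on the iterated low-pass factors themselves: Proposition \ref{stable-finite} at order $j$ gives, for a.e.\ $\xi_1$, $|\hat h_j^0(\xi_1)|^2\le B_1-\sum_{r=1}^{j}\sum_{\ell=1}^{L}|\hat h_r^\ell(\xi_1)|^2$ (with the analogous lower estimate involving $A_1$), and likewise for $g^0$ in $\xi_2$. Substituting these bounds for the low-pass squared moduli that appear in each level-$j$ block of $T_J$ --- the $(0,m)$ and $(\ell,0)$ terms for $j<J$, and the full product block $\bigl(\sum_{\ell=0}^{L}|\hat h_J^\ell(\xi_1)|^2\bigr)\bigl(\sum_{m=0}^{M}|\hat g_J^m(\xi_2)|^2\bigr)$ at level $J$ --- and summing over $j$, the mixed terms $\sum_{j}\sum_{k}\sum_{\ell,m\ge1}|\hat h_j^\ell(\xi_1)|^2|\hat g_k^m(\xi_2)|^2$ cancel exactly, leaving precisely $B_1B_2$ for the upper bound (and $A_1A_2$ for the lower bound by the analogous substitution). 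This is a direct, non-inductive computation; your recursion in $J$ is true but is not the mechanism that produces the product bounds, and nothing in your writeup plays the role of these level-$j$ estimates on $|\hat h_j^0|^2$ and $|\hat g_j^0|^2$ that drive the cancellation.
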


\begin{proof}
In order to simplify notation for the indices of the various filters in the proof, let $I$ denote the set of indices given by
$$ I = \lbrace (\ell, m) : 0\le \ell \le L, \, 0\le m\le M\rbrace.$$

\noindent
The low-pass filter $H^{0,0}$ must often be separated from the high-pass filters in computations, so it will also be convenient to define $I_{0} = I\setminus \lbrace(0,0)\rbrace$.  In light of Proposition \ref{stable2D}, the goal of the proof is to establish lower and upper bounds on the quantity
$$ Q(\xi_{1},\xi_{2}) := \vert \widehat{H}_{J}^{0,0}(\xi_{1}, \xi_{2})\vert^{2} + \sum_{j=1}^{J} \sum_{(\ell,m)\in I_{0}} \vert \widehat{H}_{j}^{\ell,m}(\xi_{1}, \xi_{2})\vert^{2}.$$

\noindent
To aid in the clarity of the proof, it will also be helpful to write $Q(\xi_{1},\xi_{2})=\sum_{j=1}^{J} Q_{j}(\xi_{1},\xi_{2})$ where
$$ Q_{j}(\xi_{1},\xi_{2}) := \sum_{(\ell,m)\in I_{0}} \vert \widehat{H}_{j}^{\ell,m}(\xi_{1}, \xi_{2})\vert^{2}, \quad 1\le j\le J-1,$$

\noindent
and
$$ Q_{J}(\xi_{1}, \xi_{2}) : = \sum_{(\ell,m)\in I} \vert \widehat{H}_{J}^{\ell,m}(\xi_{1}, \xi_{2})\vert^{2}.$$

\noindent
The finitely iterated shift-invariant filter banks associated with $h^{0}, h^{1}, \ldots, h^{L}$ are stable with bounds $A_{1}$ and $B_{1}$ for any number of iterations, so, for $1\le j\le J$, it follows from Proposition \ref{stable-finite} that
\begin{equation} \label{hfilters}
A_{1} \le \vert \hat{h}_{j}^{0}(\xi) \vert^{2} + \sum_{k=1}^{j} \sum_{\ell=1}^{L} \vert \hat{h}_{k}^{\ell}(\xi)\vert^{2} \le B_{1}, \quad \text{a.e.} \; \xi \in \mathbb{T}.
\end{equation}

\noindent
Similar reasoning implies that
\begin{equation} \label{gfilters}
A_{2} \le \vert \hat{g}_{j}^{0}(\xi) \vert^{2} + \sum_{k=1}^{j} \sum_{m=1}^{M} \vert \hat{g}_{k}^{m}(\xi)\vert^{2} \le B_{2}, \quad \text{a.e.} \; \xi \in \mathbb{T},
\end{equation}

\noindent
for $1\le j\le J$.  Observe that
$$ Q_{J}(\xi_{1},\xi_{2}) = \sum_{(\ell,m)\in I} \vert \widehat{H}_{J}^{\ell,m}(\xi_{1}, \xi_{2})\vert^{2} = \left ( \sum_{\ell=0}^{L} \hat{h}_{J}^{\ell}(\xi_{1}) \right ) \, \left ( \sum_{m=0}^{M} \hat{g}_{J}^{m}(\xi_{2}) \right ), $$

\noindent
and applying \eqref{hfilters} and \eqref{gfilters} with $j=J$ leads to the estimate
$$ Q_{J}(\xi_{1},\xi_{2}) \le \left ( B_{1} - \sum_{j=1}^{J-1} \sum_{\ell=1}^{L} \vert \hat{h}_{j}^{\ell}(\xi_{1})\vert^{2} \right ) \, \left ( B_{2} - \sum_{k=1}^{J-1} \sum_{m=1}^{M} \vert \hat{g}_{k}^{m}(\xi_{2})\vert^{2} \right ). $$

\noindent
Similarly, notice that \eqref{hfilters} and \eqref{gfilters} can also be used to bound $Q_{j}$, leading to

\medskip
\resizebox{0.975\textwidth}{!}{
\begin{math}
\begin{aligned}
Q_{j}(\xi_{1},\xi_{2}) &= \sum_{m=1}^{M} \vert \widehat{H}_{j}^{0,m}(\xi_{1}, \xi_{2})\vert^{2} + \sum_{\ell=1}^{L} \vert \widehat{H}_{j}^{\ell,0}(\xi_{1}, \xi_{2})\vert^{2} + \sum_{(\ell,m)\in I_{0}} \vert \widehat{H}_{j}^{\ell,m}(\xi_{1}, \xi_{2})\vert^{2} \\
&\le \left ( B_{1} - \sum_{r=1}^{j} \sum_{\ell=1}^{L} \vert \hat{h}_{r}^{\ell}(\xi_{1})\vert^{2} \right ) \sum_{m=1}^{M} \vert \hat{g}_{j}^{m} (\xi_{2}) \vert^{2} + \left ( B_{2} - \sum_{s=1}^{j} \sum_{m=1}^{M} \vert \hat{g}_{s}^{m}(\xi_{2})\vert^{2} \right ) \sum_{\ell=1}^{L} \vert \hat{h}_{j}^{\ell} (\xi_{1}) \vert^{2} +\sum_{(\ell,m)\in I_{0}} \widehat{H}_{j}^{\ell,m}(\xi_{1}, \xi_{2})\vert^{2}
\end{aligned}
\end{math}}

\noindent
and summing this expression for $1\le j\le J-1$ leads to

\medskip
\resizebox{0.975\textwidth}{!}{
\begin{math}
\begin{aligned}
\sum_{j=1}^{J-1} Q_{j}(\xi_{1},\xi_{2}) &\le \sum_{j=1}^{J-1} \left \lbrack  \left ( B_{1} - \sum_{r=1}^{j} \sum_{\ell=1}^{L} \vert \hat{h}_{r}^{\ell}(\xi_{1})\vert^{2} \right ) \sum_{m=1}^{M} \vert \hat{g}_{j}^{m} (\xi_{2}) \vert^{2} + \left ( B_{2} - \sum_{s=1}^{j} \sum_{m=1}^{M} \vert \hat{g}_{s}^{m}(\xi_{2})\vert^{2} \right ) \sum_{\ell=1}^{L} \vert \hat{h}_{j}^{\ell} (\xi_{1}) \vert^{2} +\sum_{(\ell,m)\in I_{0}} \widehat{H}_{j}^{\ell,m}(\xi_{1}, \xi_{2})\vert^{2} \right \rbrack \\
&= B_{1} \sum_{j=1}^{J-1}  \sum_{m=1}^{M} \vert \hat{g}_{j}^{m} (\xi_{2}) \vert^{2} + B_{2} \sum_{j=1}^{J-1}  \sum_{\ell=1}^{L} \vert \hat{h}_{j}^{\ell} (\xi_{1}) \vert^{2} -\sum_{j=1}^{J-1} \sum_{r=1}^{j} \sum_{(\ell,m)\in I_{0}} \vert \hat{h}_{r}^{\ell}(\xi_{1})\vert^{2} \, \vert \hat{g}_{j}^{m} (\xi_{2}) \vert^{2} -\sum_{j=1}^{J-1} \sum_{s=1}^{j} \sum_{(\ell,m)\in I_{0}} \vert \hat{h}_{j}^{\ell}(\xi_{1})\vert^{2} \, \vert \hat{g}_{s}^{m} (\xi_{2}) \vert^{2} \\
& \qquad + \sum_{j=1}^{J-1} \sum_{(\ell,m)\in I_{0}} \vert \hat{h}_{j}^{\ell}(\xi_{1})\vert^{2} \, \vert \hat{g}_{j}^{m} (\xi_{2}) \vert^{2} \\
&= B_{1} \sum_{j=1}^{J-1}  \sum_{m=1}^{M} \vert \hat{g}_{j}^{m} (\xi_{2}) \vert^{2} + B_{2} \sum_{j=1}^{J-1}  \sum_{\ell=1}^{L} \vert \hat{h}_{j}^{\ell} (\xi_{1}) \vert^{2} - \sum_{j=1}^{J-1} \sum_{k=1}^{J-1} \sum_{(\ell,m)\in I_{0}} \vert \hat{h}_{j}^{\ell}(\xi_{1})\vert^{2} \, \vert \hat{g}_{k}^{m} (\xi_{2}) \vert^{2}.
\end{aligned}
\end{math}}

\noindent
Notice that the terms in $\sum_{j=1}^{J-1} Q_{j}(\xi_{1},\xi_{2})$ all cancel with terms in the above estimate of $Q_{J}(\xi_{1},\xi_{2})$ leading to the conclusion that
$$ Q(\xi_{1},\xi_{2}) = \sum_{j=1}^{J} Q_{j}(\xi_{1},\xi_{2}) \le B_{1} \, B_{2}, \quad \text{a.e.} \; \xi \in \mathbb{T}^{2},$$

\noindent
establishing the Bessel bound.  Analogous reasoning leads to the estimate $Q(\xi_{1},\xi_{2})\ge A_{1} \, A_{2}$ for almost every $\xi \in \mathbb{T}^{2}$, providing the lower bound.  The details of this argument will be left to the reader.
\end{proof}

\end{document}